\newtheorem{thm}{Theorem}[section]
\newtheorem{lem}[thm]{Lemma}
\newtheorem{prop}[thm]{Proposition}
\newtheorem{cor}[thm]{Corollary}
\theoremstyle{definition}
\newtheorem{rem}[thm]{Remark}
\newcommand{\dra}{\dashrightarrow}
\def\co{{\mathcal O}}
\def\oqmm13{\co_q(M_{1,3})}
\def\oqm23{\co_q(M_{2,3})}
\begin{document}

\title[]{Free algebras and free groups in Ore extensions and free group algebras in division rings}

\author{Jason P.~Bell}
\thanks{The first author was supported by NSERC Grant 326532-2011.  The second author was supported by Grant CNPq 301.320/2011-0, and FAPESP-Brazil, Proj. Tematico 2009/52665-0.}
\keywords{Division rings, free groups, free algebras, Ore extensions, automorphisms, derivations, solvable groups}

\subjclass[2010]{16K40, 16A06, 14J50}

\address{Jason P. Bell\\
Department of Pure Mathematics\\
University of Waterloo\\
Waterloo, ON, N2L 3G1\\
Canada}

\email{jpbell@uwaterloo.ca}

\author{Jairo Z. Gon\c calves}

\address{Jairo Z. Gon\c calves\\
Department of Mathematics\\
University of S\~ao Paulo\\
S\~ao Paulo, 05508-090 \\
Brazil}
\email{jz.goncalves@usp.br}

\bibliographystyle{plain}

\begin{abstract} Let $K$ be a field of characteristic zero, let $\sigma$ be an automorphism of $K$ and let $\delta$ be a $\sigma$-derivation of $K$.  We show that the division ring $D=K(x;\sigma,\delta)$ either has the property that every finitely generated subring satisfies a polynomial identity or $D$ contains a free algebra on two generators over its center.  In the case when $K$ is finitely generated over $k$ we then see that for $\sigma$ a $k$-algebra automorphism of $K$ and $\delta$ a $k$-linear derivation of $K$, $K(x;\sigma)$ having a free subalgebra on two generators is equivalent to $\sigma$ having infinite order, and $K(x;\delta)$ having a free subalgebra is equivalent to $\delta$ being nonzero.  As an application, we show that if $D$ is a division ring with center $k$  of characteristic zero and $D^*$ contains a solvable subgroup that is not locally abelian-by-finite, then $D$ contains  a free $k$-algebra on two generators. Moreover, if we assume that $k$ is uncountable, without any restrictions on the characteristic  of $k$, then $D$ contains the $k$-group algebra of the free group of rank two.
\end{abstract}
\maketitle
\section{Introduction}  
There has been much study on free subobjects in division rings, both in the case of free subalgebras \cite{BR1, BR2, FGM05, FGS, Licht, Lor, ML, ML2, ML3, SG} and in the case of free subgroups and free subsemigroups of their multiplicative groups \cite{Chiba96, GL15, GP14, Lichtman78, ML15, RV}.  (See also the survey \cite{GS12}.) In this paper, we study the question of when division algebras formed from Ore extensions of fields contain free subalgebra in two generators over their centers.   We recall that given a ring $R$ with an automorphism $\sigma$ of $R$ and a $\sigma$-derivation, which is a map $\delta: R\to R$ satisfying $\delta(ab)=\sigma(a)\delta(b)+\delta(a)b$ for all $a,b\in R$, we
can form an \emph{Ore extension}, $R[x;\sigma,\delta]$, of $R$, which as a set is just the polynomial ring $R[x]$, but which is endowed with multiplication given by $xr=\sigma(r)x+\delta(r)$ for $r\in R$.

Makar-Limanov gave the first result in this direction by showing that if $A=k\{x,y\}/(xy-yx-1)$ is the Weyl algebra over a field $k$ of characteristic $0$, then its quotient division ring contains a free $k$-algebra on two generators \cite{ML}.   Since the first Weyl algebra can be realized as a skew extension $k[x][y;\delta]$ where $\delta$ is a derivation of $k[x]$ with $\delta(x)=1$, we see that this fits into the framework of Ore extensions.  

 The belief is that this phenomenon of division rings containing free subalgebras holds very generally, and should only fail to hold when there is an obvious obstruction, such as being commutative or being algebraic over its center.  We call the \emph{free subalgebra conjecture} the statement that a division algebra $D$ must contain a free algebra on two generators over its center unless $D$ is locally PI, that is, all of its affine subalgebras are polynomial identity algebras.  This conjecture (in some form) was formulated independently by both Makar-Limanov and Stafford.    We refer the reader to \cite{BR1} and \cite{GS12} for a more detailed discussion of the conjecture, and past work on the subject.  
 
 Unfortunately, the free subalgebra conjecture is considered to be a very hard problem.  Indeed, it is closely related to the notoriously difficult analogue of the Kurosh problem for division rings, which asks whether division rings that are finitely generated and algebraic over their centers, but that are not finite-dimensional over them, can exist.  This Kurosh analogue has remained open for some time, and a proof of the free subalgebra conjecture would give a solution to the Kurosh problem as well.  In the case of free subgroups of the multiplicative group of a division ring more is known.  In fact, Chiba \cite{Chiba96} has shown that if the base field is uncountable then a division ring that is not commutative has a free subgroup inside its multiplicative group.  For free algebras, however, the results appear to be harder to obtain.  

The first author and Rogalski showed \cite{BR1} that the free subalgebra conjecture holds for the quotient division rings of iterated Ore extensions of PI rings, if the base field $k$ is uncountable, or if the base field is countable and the associated field automorphism of $K$ is ``geometric'' in the sense of being induced by a regular automorphism of a variety whose field of rational functions is equal to $K$.  This work left an obvious gap: namely, answering the free subalgebra conjecture for division rings of the form $K(x;\sigma)$ where $\sigma$ is an automorphism of $K$ that is not geometric and such that the fixed field of $\sigma$ is countable.  This is a much more difficult case to work with because it is difficult to apply the combinatorial counting methods available when working over an uncountable base field and, likewise, it is difficult to apply geometric techniques in this setting when the automorphism is not geometric.

Our main theorem (see Corollary \ref{thm: main1}) shows how to deal with these issues when the base field is of characteristic zero.  In particular, we show that given a field $K$ of characteristic zero, then the quotient division ring of an Ore extension $K[x;\sigma,\delta]$, either contains a free algebra on two generators over its center or it is locally PI.  
\begin{thm}
\label{thm: main}
Let $K$ be a field of characteristic zero, let $\sigma$ be an automorphism of $K$ and let $\delta$ be a $\sigma$-derivation of $K$.  Then either $D:=K(x;\sigma,\delta)$ is locally PI or $D$ contains a free algebra on two generators over its center.
\end{thm}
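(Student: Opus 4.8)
The plan is to reduce the statement to a clean dichotomy governed by the order of $\sigma$ and the vanishing of $\delta$, and then to build an explicit free subalgebra in the remaining cases. First I would separate the problem according to whether $\sigma \neq \mathrm{id}$ or $\sigma = \mathrm{id}$. For $\sigma \neq \mathrm{id}$ I would use the elementary fact that on a field every $\sigma$-derivation is inner once $\sigma \neq \mathrm{id}$: choosing $s_0$ with $\sigma(s_0) \neq s_0$ and setting $d = \delta(s_0)/(s_0 - \sigma(s_0))$, one checks (by testing the $\sigma$-derivation $\delta - d(\cdot - \sigma(\cdot))$ against $s_0$) that $\delta(r) = d(r - \sigma(r))$ for all $r \in K$, so that the substitution $x' = x - d$ gives $x' r = \sigma(r) x'$ and hence $K[x;\sigma,\delta] = K[x';\sigma]$ and $D \cong K(x;\sigma)$. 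Thus $\delta$ is absorbed, and only two genuinely different situations remain: $D = K(x;\sigma)$ with $\sigma \neq \mathrm{id}$, and $D = K(x;\delta)$ with $\sigma = \mathrm{id}$.

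Next I would dispose of the ``locally PI'' side. If $\sigma$ has finite order $n$ then $x^n$ is central and $K^{\sigma}$ is central, so $D$ is finite-dimensional over the central subfield $K^{\sigma}(x^n)$ and hence satisfies a polynomial identity. Likewise, if $\sigma = \mathrm{id}$ and $\delta = 0$ then $D = K(x)$ is commutative. Hence it suffices to produce a free subalgebra under the two hypotheses that actually matter: $\sigma$ of \emph{infinite} order in the automorphism case, and $\delta \neq 0$ in the derivation case. One also checks the converse, that each of these hypotheses already forces $D$ not to be locally PI, by exhibiting a finitely generated non-PI subalgebra --- for instance the one generated by $x^{\pm 1}$ and an element with infinite $\sigma$-orbit --- which is what makes the two alternatives mutually exclusive.

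For the construction I would use a leading-term (valuation) criterion for freeness: filtering $D$ by the $x$-degree, I look for two elements $a,b$ whose symbols generate a free algebra in the associated graded skew Laurent ring, since freeness of the symbols forces freeness of $a,b$ over the center $C$. In the derivation case, picking $t$ with $\delta(t) \neq 0$ realizes a (localized) Weyl-type relation $xt - tx = \delta(t) \in K^{*}$ inside $D$, and a Makar-Limanov-style valuation argument then yields a free $C$-algebra; here characteristic zero is what guarantees the relation does not degenerate. In the automorphism case, the natural candidates are $u = x^{-1}$ and $v = x^{-1}a$ for a well-chosen $a \in K$: a word of length $m$ in $u,v$ evaluates to a product of shifts $\sigma^{j}(a)$ times $x^{-m}$, the exponents recording the positions of the $v$'s, so freeness reduces to the $K^{\sigma}$-linear independence of these products of iterates $\{\sigma^{j}(a)\}$.

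The main obstacle is precisely this last independence statement, and it is exactly the case flagged in the introduction: $\sigma$ non-geometric with countable fixed field. Over an uncountable base one could choose $a$ by a counting argument, and for geometric $\sigma$ one could use the dynamics of $\sigma$ on a model variety; neither is available here. The crux of the proof will therefore be a genuinely new, purely algebraic input, valid in characteristic zero, that produces an element $a$ whose orbit products are $K^{\sigma}$-linearly independent --- I expect this to come from analyzing $\sigma$ through a suitable valuation of $K$, reducing the infinite-order dynamics to a tractable shift-like model on the residue data and transferring the resulting independence back up to $D$, taking care throughout that the freeness obtained is over the full center $C$ and not merely over a subfield.
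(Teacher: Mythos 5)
Your reductions are sound and match the paper's: absorbing $\delta$ into an inner $\sigma$-derivation when $\sigma\neq\mathrm{id}$ (this is \cite[Lemma 5.1]{BR1}), disposing of the derivation case by a Weyl-algebra/valuation argument (\cite[Theorem 4.1]{BR1}), and recasting freeness of words in $x^{-1}, x^{-1}a$ as linear independence of products of iterates $\sigma^{j}(a)$ (the criterion of \cite[Theorem 2.2 and Lemma 2.4]{BR1}). But the proposal stops exactly where the theorem begins: you write that the crux ``will therefore be a genuinely new, purely algebraic input \dots I expect this to come from analyzing $\sigma$ through a suitable valuation of $K$,'' and no such argument is supplied. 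That independence statement for a non-geometric automorphism with countable fixed field is the entire content of the paper, and it is not obtained purely algebraically. The actual proof passes to a normal projective model $X$ with $k(X)=K$, invokes Amerik's theorem (resting on Hrushovski's model-theoretic work on the Frobenius) to produce a $\bar{k}$-point whose two-way orbit under the induced birational map is infinite and avoids the indeterminacy locus, and uses the $p$-adic analytic parametrization of orbits from Bell--Ghioca--Tucker together with Strassman's theorem to show that, after replacing $\sigma$ by an iterate, the orbit meets any closed subset either finitely often or always. This yields either a valuation $\nu$ with $\nu(f)\neq 0$ but $\nu(\sigma^{nj}(f))=0$ for almost all $j$ (feeding directly into the BR1 criterion), or a $\sigma^{n}$-stable normal affine model $S$ with a maximal ideal of infinite orbit; in the latter case one runs an induction on $\operatorname{tr.deg}_{k}K$, splitting on whether some height-one prime of $S$ has infinite orbit (BR1 criterion again) or all are periodic (pass to the residue field of the associated valuation, where the induced automorphism still has infinite order, and apply the inductive hypothesis). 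None of this inductive architecture, nor the dynamical input that drives it, appears in your outline.

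Two smaller points. First, your claim that $\sigma$ of infinite order forces $D$ not to be locally PI is false without assuming $K$ finitely generated over the fixed field: the paper's own example $E=\mathbb{C}(x_{1},x_{2},\ldots)$ with $\sigma(x_{i})=\omega_{i}x_{i}$ gives an infinite-order automorphism with $E(t;\sigma)$ locally PI, and indeed no element has infinite orbit there, so your proposed witness (the subalgebra generated by $x^{\pm 1}$ and an element of infinite orbit) need not exist. The correct dichotomy is the one in the statement (locally PI versus free), and the non-finitely-generated case requires a separate direct-limit argument plus \cite[Proposition 3.2]{BR1}. Second, freeness over the full center comes for free in the infinite-order automorphism case, since the center of $K(x;\sigma)$ is then exactly $K^{\sigma}$; that worry can be dropped.
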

In fact, every division ring of the form $K(x;\sigma,\delta)$ is isomorphic to a division ring of the form $K(x;\tau)$ for some automorphism $\tau$ of $K$, or to a division ring of the form $K(x;\mu)$ where $\mu$ is a derivation of $K$ (see the proof of Corollary \ref{thm: main1} for details).  Then, in the characteristic zero setting, one has that $K(x;\mu)$ is locally PI if and only if $\mu$ is zero, and in the special case when $K$ is finitely generated as an extension of $k$, $K(x;\tau)$ is locally PI if and only if it is finite-dimensional over its center and this occurs if and only if $\tau$ has finite order (see Conjecture \ref{thm: main1} for details).  Thus we have the following corollary of Theorem \ref{thm: main}.
\begin{cor}
Let $k$ be a field of characteristic zero, let $K$ be a finitely generated extension of $k$.  Then we have the following:
\begin{enumerate}
\item[(1)] if $\sigma$ is a $k$-algebra automorphism of $K$ then $K(x;\sigma)$ contains a free algebra over $k$ if and only if $\sigma$ has infinite order;
\item[(2)] if $\delta$ is a $k$-linear derivation of $K$ then $K(x;\delta)$ contains a free algebra over $k$ if and only if $\delta$ is nonzero.
\end{enumerate}
\end{cor}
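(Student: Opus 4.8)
The plan is to deduce both parts from the locally-PI dichotomy, using Theorem \ref{thm: main} for the automorphism case and the Makar-Limanov theorem directly for the derivation case. Two elementary observations will be used repeatedly. First, a free algebra over the center $Z(D)$ is a fortiori free over $k$, since $k\subseteq Z(D)$ and any nontrivial $k$-relation among two elements would be a nontrivial $Z(D)$-relation. Second, a locally PI division ring contains no free algebra on two generators, because such a free algebra is affine and satisfies no polynomial identity. Together with Theorem \ref{thm: main}, these show that containing a free $k$-algebra on two generators is equivalent to not being locally PI, so both parts reduce to characterizing when $D$ is locally PI.

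For part (2) I would argue directly. If $\delta=0$ then $K(x;\delta)=K(x)$ is commutative and contains no free algebra. If $\delta\ne0$, pick $a\in K$ with $c:=\delta(a)\ne0$; then $p=c^{-1}x$ and $q=a$ satisfy $pq-qp=1$ (a one-line computation from $xa-ax=\delta(a)$, using that $a,c$ commute in $K$). Hence there is a nonzero $k$-algebra homomorphism from the first Weyl algebra $A_1$ to $D$ sending its two canonical generators to $p$ and $q$. Since $A_1$ is simple (here characteristic zero is used), this map is injective, so the Weyl field $\mathrm{Frac}(A_1)$ embeds in $D$; by Makar-Limanov \cite{ML} it contains a free $k$-algebra on two generators, and hence so does $D$.

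For part (1), the reduction above leaves us to show, via Theorem \ref{thm: main}, that $K(x;\sigma)$ is locally PI if and only if $\sigma$ has finite order. If $\sigma$ has order $n$, then $x^n$ is central and, since $K/K^\sigma$ is Galois of degree $n$ by Artin's theorem, $D$ is spanned over the central subfield $K^\sigma(x^n)$ by the products $bx^i$ with $0\le i<n$ and $b$ ranging over a basis of $K/K^\sigma$; thus $D$ is finite-dimensional over its center, hence PI, hence locally PI. Conversely, suppose $\sigma$ has infinite order and write $K=k(a_1,\dots,a_m)$. The affine subalgebra $A=k\langle a_1,\dots,a_m,x,x^{-1}\rangle$ has division closure equal to $D$, because $a_1,\dots,a_m,x$ generate $D$ as a division ring. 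If $D$ were locally PI, then $A$ would be an affine PI domain, hence an Ore domain with $\mathrm{Frac}(A)=D$, and Posner's theorem would force $D$ to be finite-dimensional over its center. But a direct computation shows that for $\sigma$ of infinite order the center of $D$ is the fixed field $K^\sigma$, with $[K:K^\sigma]=\infty$ (otherwise $\sigma$ would be a $K^\sigma$-automorphism of a finite extension, hence of finite order); since $K\subseteq D$, this contradicts finite-dimensionality of $D$ over $Z(D)=K^\sigma$. Hence $D$ is not locally PI, and Theorem \ref{thm: main} produces a free algebra over $Z(D)$, which is free over $k$.

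The main obstacle I anticipate is this converse direction, namely making the Posner argument bite. The two points requiring care are that $A$ really does generate $D$ as a division ring, so that $\mathrm{Frac}(A)=D$, and the centralizer computation identifying $Z(D)=K^\sigma$ for $\sigma$ of infinite order. Once these are in hand, the remaining steps are a routine assembly of Theorem \ref{thm: main}, Posner's theorem, the Amitsur--Levitzki and Artin theorems, and the Weyl-algebra embedding; the finite generation of $K$ enters precisely to make $A$ affine.
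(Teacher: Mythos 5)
Your proposal is correct and follows essentially the same route as the paper: both parts are deduced from the locally-PI dichotomy of Theorem \ref{thm: main}, with the corollary reduced to showing that $K(x;\delta)$ is locally PI iff $\delta=0$ and that $K(x;\sigma)$ is locally PI iff $\sigma$ has finite order. The paper merely asserts these two characterizations (calling (2) well-known), whereas you supply the standard details --- the Weyl-algebra embedding with Makar-Limanov for the derivation case, and Posner's theorem together with $Z(D)=K^{\sigma}$ and Artin's theorem for the automorphism case --- all of which check out.
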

We note that (2) is already well-known, but (1) is new and appears to require algebro-geometric machinery as well as deep model theoretic work of Hrushovski \cite{H}, in order to obtain this result.  

As an application of Theorem \ref{thm: main}, we can prove a result about the existence of free subalgebras in division rings whose multiplicative groups contain solvable subgroups that have some finitely generated subgroup that is not abelian-by-finite (see Theorem \ref{thm: main2}).   
\begin{thm}
\label{thm: main0}
Let $D$ be a division ring with center $k$, and suppose that $G\le D^*$ is solvable but not locally abelian-by-finite.  Then the following statements hold:

\begin{enumerate} 
 \item if $k$ has characteristic $0$   then $D$ contains a free  $k$-algebra on two generators , and
 \item if $k$ is uncountable then $D$ contains the $k$-group algebra of the free group of rank two.
\end{enumerate}
\end{thm}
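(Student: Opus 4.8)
The plan is to reduce the statement to an application of Theorem \ref{thm: main} (for part (1)) and to the known free–group–algebra results over uncountable fields (for part (2)), by exhibiting inside $D$ a skew field of the form $K(x;\sigma)$ that is not locally PI. Since a group is locally abelian-by-finite precisely when all of its finitely generated subgroups are abelian-by-finite, the hypothesis yields a finitely generated subgroup $H\le G$ that is solvable but not abelian-by-finite, and it suffices to work inside the division subring $D_0\subseteq D$ generated by $H$. Note that $k$ lies in the center of $D_0$, and since $H$ is not abelian, $D_0$ is noncommutative.

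First I would extract an abelian normal subgroup together with an element acting on it. Using the derived series of $H$, choose a nontrivial abelian subgroup $A$ that is normal in $H$ (for instance a term of the derived series), and set $K=\mathrm{Frac}(k[A])\subseteq D_0$; this is a field because $A$ is abelian. Any $g\in H$ normalizes $A$, so conjugation by $g$ restricts to an automorphism $\sigma$ of $K$, and the relations $gag^{-1}=\sigma(a)$ show that $x\mapsto g$ defines a ring homomorphism $K[x;\sigma]\to D_0$. The first technical point is to guarantee that, for a suitable choice of $A$ and $g$, this homomorphism is injective, i.e.\ that the powers of $g$ are left $K$-linearly independent; this yields an embedding $K(x;\sigma)\hookrightarrow D_0$.

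The crux of the argument is to arrange that $K(x;\sigma)$ is not locally PI, and this is where the hypothesis that $H$ is not abelian-by-finite must be used. The key claim is that if, for every choice of abelian normal $A\trianglelefteq H$ and every $g\in H$, the resulting automorphism $\sigma$ had finite order (equivalently, the associated skew field were locally PI), then $H$ would be abelian-by-finite, contradicting the hypothesis. Establishing this implication is the main obstacle: it requires translating the purely group-theoretic condition ``not abelian-by-finite'' into the statement that some conjugation action produces an automorphism of infinite order on the field generated by an abelian normal subgroup, and one must be careful that $A$ need not be finitely generated, so that $K$ need not be finitely generated over $k$ and one must invoke Theorem \ref{thm: main} itself rather than its finitely generated corollary.

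With such a $K(x;\sigma)\hookrightarrow D_0$ that is not locally PI in hand, the two conclusions follow by separate routes. For part (1), when $k$ has characteristic zero, Theorem \ref{thm: main} applies to the characteristic-zero Ore division ring $K(x;\sigma)$ and shows it contains a free algebra on two generators over its center $Z$; since $k\subseteq Z$, a free $Z$-algebra on two generators contains a free $k$-algebra on two generators, giving the desired free $k$-algebra inside $D$. For part (2), we cannot appeal to Theorem \ref{thm: main}, as the characteristic is unrestricted; instead $K(x;\sigma)$ is a noncommutative division ring whose center contains the uncountable field $k$, and one applies the uncountable-center free-group-algebra results (in the spirit of Chiba's theorem on free subgroups) to embed $k[F_2]$, the $k$-group algebra of the free group of rank two, into $K(x;\sigma)$ and hence into $D$.
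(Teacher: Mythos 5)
Your high-level strategy (locate a non-locally-PI skew field of the form $K(x;\sigma)$ inside $D$, then invoke Theorem \ref{thm: main} in characteristic zero and the uncountable-base-field results plus the Gon\c calves--Shirvani theorem for part (2)) is the right one, but the entire difficulty of the theorem sits in the step you explicitly leave open, and that step is not merely ``technical.'' Your key claim --- that if every abelian normal subgroup $A\trianglelefteq H$ and every $g\in H$ yield a locally PI skew field $\mathrm{Frac}(k[A])(x;\sigma_g)$, then $H$ is abelian-by-finite --- is stated without proof, and as formulated it is also flawed. First, the parenthetical ``equivalently'' is false: finite order of $\sigma$ is \emph{not} equivalent to $K(x;\sigma)$ being locally PI when $K$ is not finitely generated over the fixed field; the paper's own example after Theorem \ref{thm: main2} ($E=\mathbb{C}(x_1,x_2,\ldots)$ with $\sigma(x_i)=\omega_i x_i$) gives an infinite-order automorphism with $E(t;\sigma)$ locally PI, so even producing an infinite-order conjugation action would not finish the argument. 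Second, your suggested source of $A$, ``a term of the derived series,'' fails already for the Heisenberg group: there the only nontrivial derived-series term is the center, on which every conjugation acts trivially, so one must instead hunt for larger (maximal) abelian normal subgroups, and no argument is given that a suitable one exists in general.

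The paper's proof is structured precisely to avoid this reduction to a commutative base, and that is worth understanding. It first replaces $G$ by the last derived term that is not locally abelian-by-finite, so that $G'$ \emph{is} locally abelian-by-finite; it then uses Jategaonkar's result (valid once one assumes $D$ has no free subalgebra) and Zorn's lemma to choose a subgroup $H$ with $G'\le H\le G$ maximal such that the division subalgebra $L$ generated by $H$ is locally PI. For $g\in G\setminus H$ one gets $D_0\cong L(x;\sigma)$ not locally PI --- but here $L$ is in general a \emph{noncommutative} locally PI division algebra, not a field, so Theorem \ref{thm: main} cannot be applied to it. This is exactly why the paper proves Proposition \ref{prop1}, a genuinely nontrivial result about Ore extensions $E(x;\sigma)$ of locally PI division algebras $E$, whose hard case ($E$ locally PI but not PI) is handled by an explicit free pair $b_1x,\ \sigma(b_1)x^2$ and a weight/grading argument. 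Your proposal contains no counterpart to Proposition \ref{prop1} and no mechanism for the case where the relevant normal subgroup (e.g.\ $G'$ itself) is locally abelian-by-finite but not abelian; filling your gap would essentially amount to either proving your key claim (which is at least as hard as, and possibly falser than, what the paper does) or reconstructing the paper's noncommutative-base machinery.
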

Previously this had only been shown when $G$ is polycyclic-by-finite and not abelian-by-finite \cite{L}.  We note that in the polycyclic-by-finite case, being locally abelian-by-finite is equivalent to being abelian-by-finite.  

Our last result is to consider Lichtman's question on the existence of free subgroups of $D^*=D \setminus \{0\}$, the multiplicative groups of a division ring   $D$  of the form $K(t;\sigma,\delta)$.  As is standard, one can reduce this question to the study of division rings of the form $K(t;\sigma)$ and $K(t;\delta)$ where $\sigma$ and $\delta$ are respectively an automorphism of $K$ and a derivation of $K$.  In the case when $\delta$ is a nonzero derivation, then we show that $K(t;\delta)^*$ always contains a free non-cyclic subgroup.  When $K(t;\sigma)$ is not a field and $\sigma$ either has finite order, or has infinite order and its fixed field has infinite transcendence degree over the prime field, we show that $K(t;\sigma)^*$ contains a free non-cyclic subgroup.  Thus for Ore extensions of this form, the only remaining question to answer is whether there are free non-cyclic subgroups of $D^*$, when 
$D=K(t;\sigma)$ and $\sigma$ has infinite order and its fixed field is of finite transcendence degree over the prime field.

The proof of Theorem \ref{thm: main} uses work of Amerik \cite{Amerik}, which is in turn used work of Hrushovski \cite{H}, and the work of the first author and Ghioca and Tucker \cite{BGT}.  In particular, given a field $k$ of characteristic zero and a finitely generated extension $K$ of $k$, a $k$-algebra automorphism $\sigma$ of $K$ corresponds to a birational map $\phi: X\dra X$ of a normal projective variety defined over $k$, whose field of rational functions is $K$.  Amerik shows that if $\sigma$ has infinite order then there is some point $x\in X(\bar{k})$ whose orbit under $\phi$ is defined at every point and such that the orbit is infinite.  Moreover, she shows that if one replaces $\phi$ by a suitable iterate, the orbit can be parametrized by a $p$-adic analytic arc and thus, one can use tools from $p$-adic analysis such as Strassman's theorem to study the intersection of the orbit with closed subsets of $X$.  We make use of this to show that one can apply a criterion of the first author and Rogalski \cite{BR1}, to obtain the existence of free algebras.  

The outline of this paper is as follows.  In \S 2, we describe the work of Amerik and how it will apply.  In \S 3, we use the results of \S 2 to prove Theorem \ref{thm: main}.  In \S 4 we apply Theorem \ref{thm: main} to obtain Theorem \ref{thm: main0}.  Finally, in \S 5 we give some results on free subgroups of multiplicative groups of division rings.


\section{Geometric results}
In this section, we give a description of Amerik's work and how it will apply to our setting.  We begin with a statement of her main result.
\begin{thm} (Amerik \cite{Amerik}) Let $X$ be a projective variety defined over a field $k$ of characteristic zero and let $f:X\dra X$ be a dominant self-map that has infinite order and is also defined over $k$.  Then there exists a point $x\in X(\bar{k})$ such that orbit of $x$ under $f$ is infinite.
\label{thm: a}
\end{thm}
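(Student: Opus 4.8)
The plan is to argue by contradiction and to transport the dynamics into a $p$-adic analytic setting where orbits can be interpolated. Assume that every point of $X(\bar k)$ is preperiodic under $f$; I will show this forces $f^m=\mathrm{id}$ for some $m\ge 1$, contradicting the assumption that $f$ has infinite order. First I would reduce to a manageable arithmetic situation: since $X$ and $f$ are defined by finitely many equations over $k$, there is a finitely generated $\mathbb{Z}$-subalgebra $R\subseteq k$ over which a model $\mathcal{X}\to\spec R$ of $X$ and a model of $f$ are defined. Localizing $R$ and choosing a suitable maximal ideal yields a homomorphism $R\to\mathbb{F}_q$ and, after completing, an embedding of $R$ into the ring of integers $\mathcal{O}$ of a finite extension $K$ of $\mathbb{Q}_p$ with residue field $\mathbb{F}_q$. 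For all but finitely many primes one obtains good reduction: a smooth projective model whose special fiber $X_0$ is a variety over $\mathbb{F}_q$ and on which $f$ specializes to a dominant rational map $\bar f$.

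Next I would locate a good residue disc. The set $X_0(\mathbb{F}_q)$ is finite, and $\bar f$ is defined away from a proper closed subset; a pigeonhole argument then produces a point $\bar x\in X_0(\mathbb{F}_q)$ that is smooth, lies outside the indeterminacy locus of $\bar f$ and of all its iterates, and is periodic for $\bar f$. Replacing $f$ by the iterate $f^m$, where $m$ is the period, I may assume $\bar x$ is a fixed point. The residue disc $U=\{y\in\mathcal{X}(\mathcal{O}): y\equiv \bar x\}$ is then a $p$-adic polydisc of dimension $d=\dim X$, and because $\bar x$ is a smooth fixed point off the indeterminacy locus, $f$ induces a $p$-adic analytic self-map $g:U\to U$. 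After passing to a further iterate and enlarging $p$ if necessary, I would arrange that $g$ is congruent to the identity, so that its iterates admit an analytic interpolation in the sense of Bell--Ghioca--Tucker: for each $x_0\in U$ there is an analytic arc $\gamma_{x_0}:\mathcal{O}\to U$ with $\gamma_{x_0}(n)=g^n(x_0)$ for all $n\in\mathbb{Z}_{\ge 0}$.

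Finally I would derive the contradiction using Strassman's theorem. Fix an algebraic point $x_0\in U$. By assumption its forward orbit under $g$ is finite, so its Zariski closure $Z$ is zero-dimensional. For any regular function $h$ vanishing on $Z$, the composite $h\circ\gamma_{x_0}$ is a $p$-adic analytic function on $\mathcal{O}$ vanishing at every nonnegative integer; by Strassman's theorem it must be identically zero. Hence $\gamma_{x_0}(\mathcal{O})\subseteq Z$, and since $Z$ is finite and $\gamma_{x_0}$ is analytic it is constant, giving $g(x_0)=\gamma_{x_0}(1)=\gamma_{x_0}(0)=x_0$. Thus every algebraic point of $U$ is fixed by $g$; as these points are Zariski dense in $X$, the rational map $g=f^m$ coincides with the identity on a dense set and therefore $f^m=\mathrm{id}$, the desired contradiction. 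The main obstacle is the geometric input of the second step: for a merely rational (non-morphism) self-map one must control the indeterminacy loci of all iterates simultaneously and secure good reduction along a periodic residue disc, and it is precisely here that the deeper results of Hrushovski and of Bell--Ghioca--Tucker are needed; the analytic interpolation and the closeness-to-identity normalization are the other technically delicate points.
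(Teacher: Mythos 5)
Your proposal is correct and follows essentially the same route the paper sketches for Amerik's argument: spread out over a finitely generated $\mathbb{Z}$-algebra, reduce modulo a well-chosen maximal ideal, invoke Hrushovski to produce a periodic point avoiding the indeterminacy and ramification loci, lift to a $p$-adic residue disc where the Bell--Ghioca--Tucker interpolation applies, and finish with Strassman's theorem; your contrapositive formulation (all points preperiodic forces $f^m=\mathrm{id}$ on a Zariski dense set) is equivalent to the paper's conclusion that some well-defined orbit is infinite. The one caution is that locating the good periodic point $\bar{x}$ is not a mere pigeonhole over $X_0(\mathbb{F}_q)$ --- all such points could lie in the indeterminacy locus --- but genuinely requires Hrushovski's theorem, as you correctly acknowledge at the end.
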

There is an important remark to be made here.  Since $f$ is a rational map, it is not defined at all points in $X$.  In particular, it is highly non-obvious that we should be able to find a point $x$ such that $f$ is defined at $f^n(x)$ for all $n\ge 0$.  Nevertheless, Amerik was able to prove this.   We note that although the result of Amerik is stated for varieties defined over a number field, the argument is easily seen to apply to any field of characteristic zero, and the fact that it does is well-known to experts.  We suspect that the main audience of this work will not be familiar with some of the techniques used, so we give a brief description of her methods and why they apply in this more general setting.  

The way one proves Theorem \ref{thm: a} is by first noting that a projective variety and the map $f$ can all be defined in terms of finitely many polynomials and so we can assume without loss of generality that $k$ is a finitely generated extension of $\mathbb{Q}$ by replacing $k$ by the subfield generated over $\mathbb{Q}$ by the coefficients of these polynomials.  Thus we may reduce to the case when $k$ is a finitely generated extension of $\mathbb{Q}$.  Amerik next shows that by choosing an affine open subset $U$ of $X$ on which $f$ is regular, one can work with a model $\mathcal{U}$ of $U$ over ${\rm Spec}(A)$, where $A$ is a finitely generated $\mathbb{Z}$-algebra whose field of fractions is equal to $k$.  Now since $A$ satisfies the Nullstellensatz \cite[Theorem 4.19]{Eisenbud}, we have a Zariski dense set of maximal ideals of $A$ and the quotient of $A$ by each maximal ideal is a finite field.  By choosing a suitable maximal ideal $Q$ of $A$, Amerik argues that we can reduce mod $Q$ and get a projective variety $\bar{X}$ defined over a finite field $\mathbb{F}_q$ with a rational self-map $\bar{f}$ also defined over $\mathbb{F}_q$.  Amerik now uses an argument of Hrushovski \cite[Corollary 1.2]{H} to show that there is some $\bar{\mathbb{F}}_q$-point of $\bar{X}$ which has a finite orbit under $\bar{f}$ and such that $\bar{f}$ is defined at each point of the orbit.  Moreover, she shows that one can pick a point which has the property that each point in the orbit avoids the ramification locus of $f$.  Now she argues that there is a finite integral extension of $B$ of $A$ such that $x$ lifts to a Zariski dense set of points in $y\in U(B)$ whose reduction mod $Q$ is equal to $x$.  

She now uses a $p$-adic argument of the first author, Ghioca, and Tucker \cite{BGT} to show that there is a Zariski dense set of points $y\in X(\bar{k})$ for which one give a $p$-adic analytic parametrization of the orbit of $y$ under some iterate $g=f^n$ of $f$ with $n\ge 1$.  This is accomplished by embedding $B$ in the $p$-adic integers, $\mathbb{Z}_p$, for a suitable prime $p$ and then using Hensel-type arguments (this strongly uses zero characteristic, but Amerik also implicitly uses characteristic zero in other places in the proof).  These techniques are explained in more detail in \cite{BGT}.  Finally, she shows that at least one such orbit must be infinite if $f$ has infinite order (in fact, she shows that the set of such points is Zariski dense).  Moreover, the $p$-adic parametrization extends to the backwards orbit of $y$ the case that $g$ is birational, and so one has the entire two-way orbit of $y$ under $g$ avoids the indeterminacy locus of $g$ and has a $p$-adic parametrization.  We also note that by these $p$-adic methods (cf. \cite[Theorem 1.3]{BGT}), that after replacing $f$ by a suitable iterate $g$, that if we take the two-way orbit of $y$ under $g$ and let $Y$ be a Zariski closed subset of $X$, then the set of $n$ for which $g^n(y)\in Y$ is either finite or $g^n(y)\in Y$ for every integer $n$.  The reason for this is that the $p$-adic methods of \cite[Theorem 1.3]{BGT} show that there exist a finite set of $p$-adic analytic maps $\theta:\mathbb{Z}_p\to \mathbb{Z}_p$ such that each $\theta(n)=0$ if and only if $g^n(y)\in Y$ and by Strassman's theorem, a $p$-adic analytic map that is not identically zero has at most finitely many zeros on $\mathbb{Z}_p$.   
Thus we have as a corollary, the following result.
\begin{cor}
\label{c: amerik}
Let $X$ be a projective variety defined over a field $k$ of characteristic zero, and let $\phi:X\dra X$ be a birational map that has infinite order and is also defined over $k$.  Then there exists a point $x\in X(\bar{k})$ such that two-way orbit of $x$ under $\phi$ is infinite.  Moreover, there exists some $n\ge 1$ such that if we let $\psi=\phi^n$, then given a Zariski closed subset $Y$ of $X$ we have that the set of integers $n$ for which $\psi^n(x)\in Y$ is either all of $\mathbb{Z}$ or is finite.  \end{cor}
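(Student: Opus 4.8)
The plan is to follow the same reduction-and-lifting route sketched above for Amerik's theorem, while keeping track of the extra structure afforded by $\phi$ being birational rather than merely dominant. First I would reduce to the case that $k$ is finitely generated over $\mathbb{Q}$: the variety $X$ and the map $\phi$ are cut out by finitely many polynomials, so replacing $k$ by the subfield generated over $\mathbb{Q}$ by their coefficients changes nothing. Since $\phi$ is birational it restricts to an isomorphism between dense open subsets of $X$, and I would choose an affine open $U$ on which both $\phi$ and $\phi^{-1}$ are regular and spread $U$, $\phi$, and $\phi^{-1}$ out to a model $\mathcal{U}$ over ${\rm Spec}(A)$ for a finitely generated $\mathbb{Z}$-algebra $A$ with ${\rm Frac}(A)=k$, exactly as in the proof of Theorem \ref{thm: a}.

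Next I would carry out Amerik's reduction-and-lifting construction. Choosing a suitable maximal ideal $Q$ of $A$ (possible since $A$ satisfies the Nullstellensatz) gives a variety $\bar{X}$ over a finite field $\mathbb{F}_q$ together with the reductions $\bar\phi$ and $\bar\phi^{-1}$. Hrushovski's result \cite[Corollary 1.2]{H} produces an $\bar{\mathbb{F}}_q$-point $\bar{x}\in\bar{U}$ whose orbit under $\bar\phi$ is finite and avoids the indeterminacy and ramification loci; because $\bar\phi$ is a bijection on this finite orbit, the orbit point becomes a fixed point of some power $\bar\phi^n$. I would then fix this exponent, set $\psi=\phi^n$, and invoke the $p$-adic lifting of \cite{BGT}: embedding a finite extension $B$ of $A$ into $\mathbb{Z}_p$ for a well-chosen prime $p$ and applying Hensel-type arguments (which is where characteristic zero is essential), one lifts the fixed point to a point $x\in X(\bar{k})$ together with a $p$-adic analytic parametrization $n\mapsto \psi^n(x)$ of its two-way orbit. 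Here birationality of $\psi$ is exactly what lets the parametrization extend to negative $n$: since the orbit avoids the indeterminacy loci of both $\psi$ and $\psi^{-1}$, the whole two-way orbit is well defined and stays inside a single $p$-adic residue tube over $U$. Amerik's argument shows that for infinite-order $\phi$ at least one such orbit is infinite, which gives the first assertion, since the two-way orbit contains this infinite orbit.

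For the second assertion I would argue by Strassman's theorem. Fix the point $x$ and the iterate $\psi$ just constructed, and let $Y\subseteq X$ be Zariski closed. Since the entire two-way orbit lies in the affine chart $U$ (it reduces to the fixed point $\bar{x}\in\bar{U}$), I can write $Y\cap U$ as the common zero locus of finitely many regular functions $h_1,\dots,h_r$ on $U$. Composing each $h_i$ with the $p$-adic analytic parametrization yields $p$-adic analytic functions $\theta_i\colon \mathbb{Z}_p\to\mathbb{Z}_p$, $\theta_i(n)=h_i(\psi^n(x))$, with the property that $\psi^n(x)\in Y$ if and only if $\theta_1(n)=\cdots=\theta_r(n)=0$. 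If every $\theta_i$ vanishes identically, then $\psi^n(x)\in Y$ for all $n\in\mathbb{Z}$; otherwise some $\theta_i$ is not identically zero, and Strassman's theorem bounds the number of its zeros on $\mathbb{Z}_p$, whence $\{n\in\mathbb{Z}:\psi^n(x)\in Y\}$ is finite. This is the claimed dichotomy.

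The technical heart---and the step I expect to be the main obstacle---is producing the $p$-adic analytic parametrization of the two-way orbit that simultaneously avoids all of the indeterminacy and ramification loci. This rests on the combination of Hrushovski's finite-orbit theorem in positive characteristic with the Hensel-style lifting of \cite{BGT}, and it uses characteristic zero in an essential way (for instance through the convergence of the $p$-adic logarithm and exponential used to linearize $\psi$ near the lifted fixed point, and through the unramifiedness needed to perform the lift). Once this parametrization is in hand, both assertions follow formally: the first from Amerik's infinitude statement, and the second from Strassman's theorem as above.
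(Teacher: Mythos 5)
Your proposal follows essentially the same route as the paper: reduce to $k$ finitely generated over $\mathbb{Q}$, spread out over a finitely generated $\mathbb{Z}$-algebra, reduce modulo a suitable maximal ideal, apply Hrushovski's theorem to find a periodic point avoiding the indeterminacy and ramification loci, lift $p$-adically via \cite{BGT} to obtain an analytic parametrization of the two-way orbit (using birationality for the negative iterates), and conclude with Amerik's infinitude statement and Strassman's theorem. The only minor imprecision is that the iterate $n$ may need to be enlarged beyond the period of the reduced point to make the Hensel-type linearization work, but the paper glosses over this in the same way, so the argument is correct.
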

In the next section, we will especially make use of Corollary \ref{c: amerik} to obtain Theorem \ref{thm: main}
\section{Proof of the main result}
In this section we shall prove Theorem \ref{thm: main} (see Corollary \ref{thm: main1}).  To do this, we begin with a technical proposition, which gives a dichotomy which will allow us to obtain our main result.
\begin{prop}
\label{prop: S} Let $k$ be a field of characteristic zero and let $K$ be a finitely generated extension of $k$ of transcendence degree $d\ge 1$, with the property that $k$ is algebraically closed in $K$.  If $\sigma:K\to K$ is a $k$-algebra automorphism then there exists $n\ge 1$ such that, either there exist $f\in K$ and a valuation $\nu:K^*\to \mathbb{Z}$ such that $\nu(f)\neq 0$ and $\nu(\sigma^{nj}(f))=0$ for all but finitely many $j$, or there exists a $k$-subalgebra $S$ of $K$ with the following properties:
\begin{enumerate}
\item[(i)] $S$ is noetherian and integrally closed and has Krull dimension $d$;
\item[(ii)] the field of fractions of $S$ is equal to $K$;
\item[(iii)] $S$ is a localization of a finitely generated $k$-algebra.
\item[(iv)] $\sigma^n$ restricts to a $k$-algebra automorphism of $S$;
\item[(v)] $S$ has a maximal ideal that has infinite orbit under $\sigma^n$. 
\end{enumerate}
\end{prop}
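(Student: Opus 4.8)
My plan is to pass to geometry and then run a dichotomy governed by the dynamics of $\sigma^n$ on divisorial valuations of $K$. Since $K/k$ is finitely generated of transcendence degree $d$ and $k$ is algebraically closed in $K$ (and $\cha k = 0$), there is a geometrically integral normal projective variety $X$ over $k$ with $k(X) = K$, and the $k$-automorphism $\sigma$ corresponds to a birational self-map $\phi\colon X \dra X$. One may assume $\sigma$, hence $\phi$, has infinite order (this is the case of interest; otherwise the relevant division ring is PI). I then apply Corollary~\ref{c: amerik} to fix once and for all an integer $n \geq 1$ and a point $x \in X(\bar{k})$ such that, writing $\psi = \phi^n$ (corresponding to $\sigma^n$), the two-way orbit of $x$ under $\psi$ is infinite, avoids the indeterminacy locus of every iterate of $\psi$, and has the property that for every Zariski closed $Y \subseteq X$ the set $\{m \in \Z : \psi^m(x) \in Y\}$ is finite or all of $\Z$. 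This $n$ is the integer in the statement.

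Next I fix an affine open $U \subseteq X$ with normal coordinate ring $A_0$ and $\fract(A_0) = K$, and I record the finitely many prime divisors $D_1, \dots, D_r$ whose union is $X \setminus U$, together with the associated discrete valuations $\nu_{D_i}$ on $K$. The key observation is that $\sigma^n$ acts on divisorial valuations by $\nu \mapsto \nu \circ \sigma^n$, and that for $f \in K^*$ one has $\nu(\sigma^{nj}(f)) = (\nu\circ\sigma^{nj})(f)$, which vanishes precisely when the center of $\nu \circ \sigma^{nj}$ on $X$ --- equivalently the image under an iterate of $\psi$ of the center of $\nu$ --- is not contained in the support $Z = \mathrm{supp}(\mathrm{div}(f))$, a fixed proper closed subset. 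This translates the choice between properties (i)--(v) and the valuation alternative into a statement about the orbits of the boundary divisors under $\psi$.

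The dichotomy is then the following. \emph{Case A: some $D_i$ has infinite orbit under $\sigma^n$.} Then the valuations $\nu_{D_i}\circ\sigma^{nj}$ are infinitely many distinct divisorial valuations, and choosing $f$ with $\nu_{D_i}(f) \neq 0$ and $\mathrm{div}(f)$ supported on a fixed finite set of prime divisors, the centers of $\nu_{D_i}\circ\sigma^{nj}$ escape $\mathrm{supp}(\mathrm{div}(f))$ for all but finitely many $j$, so that $\nu_{D_i}(\sigma^{nj}(f)) = 0$ for all but finitely many $j$; this yields the valuation alternative with $\nu = \nu_{D_i}$. \emph{Case B: every boundary divisor has finite orbit under $\sigma^n$.} Then $\Sigma := \bigcup_{i}\bigcup_{t\in\Z}\sigma^{nt}(D_i)$ is a finite, $\sigma^n$-invariant set of prime divisors. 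After replacing $X$ by a model on which $\psi$ does not contract any member of $\Sigma$ (possible because $\Sigma$ is finite), I set $S$ to be the ring of regular functions on $X \setminus \Sigma$, that is $S = \bigcap_{D \notin \Sigma}\O_{X,D}$, localized if necessary so as to be a localization of a finitely generated $k$-algebra. Normality and Krull dimension $d$ (properties (i), (ii)) are inherited from normality of $X$; property (iii) holds by construction; and the $\sigma^n$-invariance of $\Sigma$ together with the non-contraction gives $\sigma^n(S) = S$, which is (iv). For (v), the point $x$ --- whose orbit avoids the indeterminacy locus and which I may take to lie in $\spec S$ --- defines a maximal ideal $\mathfrak{m}_x$ of $S$ (its residue field is finite over $k$), and since $\psi$ restricts to an automorphism of $\spec S$ and the orbit of $x$ is infinite, $\mathfrak{m}_x$ has infinite orbit under $\sigma^n$.

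The main obstacle is the passage between the two cases and, within Case B, securing the \emph{exact} invariance $\sigma^n(S) = S$ rather than a mere inclusion. Because $\phi$ is only birational, divisors may be contracted and indeterminacy loci intervene; controlling this is precisely what distinguishes the two cases, since an escaping (contracted) boundary divisor is exactly what produces the valuation of Case A. Making the finite-orbit divisors genuinely divisorial and $\psi$-stable on a single normal model, while keeping $S$ a localization of a finitely generated algebra of the correct dimension, is the delicate birational-geometry step; and verifying (v) ultimately rests on the infiniteness of the orbit of $x$ and the finite-or-all intersection property, both of which come from Amerik's theorem via the $p$-adic and Strassman control recalled in Corollary~\ref{c: amerik}.
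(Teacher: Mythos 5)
Your overall strategy (pass to a normal projective model, invoke Corollary \ref{c: amerik}, split into two cases) matches the paper in spirit, but your dichotomy is genuinely different from the paper's and, as set up, it has gaps I do not see how to close. The paper's case division is governed by the orbit of the \emph{point} $x$: either the two-way orbit of $x$ under $\phi^n$ meets one of the boundary divisors $Y_i$, or it is entirely contained in the affine chart $U$. In the first case the valuation alternative is extracted by transporting ``$\sigma^{nj}(f)$ vanishes along $Y_i$'' to ``$f$ vanishes at $\sigma^{-nj}(x')$'' for a point $x'$ of the orbit lying on $Y_i$, and then applying the finite-or-all property of Corollary \ref{c: amerik} to the zero locus of $f$ and the orbit of the point; in the second case $S$ is the ring of rational functions regular at every point of the orbit, which is automatically a $\sigma^{\pm n}$-stable localization of the coordinate ring of $U$ because the orbit itself is $\phi^{\pm n}$-stable.

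Your dichotomy is instead governed by the orbits of the boundary \emph{divisors}, and this creates three concrete problems. First, in your Case A, the claim that the centers of the valuations $\nu_{D_i}\circ\sigma^{nj}$ escape $\mathrm{supp}(\mathrm{div}(f))$ for all but finitely many $j$ does not follow from these being infinitely many pairwise distinct divisorial valuations: infinitely many distinct divisorial valuations can all have center contained in one fixed prime divisor (e.g.\ if the iterates of $\psi^{-1}$ contract $D_i$ to points lying on a component of $\mathrm{div}(f)$), and Corollary \ref{c: amerik} controls orbits of \emph{points} only, not orbits of divisors, so it cannot be invoked to rule this out. Second, in Case B the ring $S=\bigcap_{D\notin\Sigma}\mathcal{O}_{X,D}$ satisfies $\sigma^n(S)\subseteq S$ only if $\sigma^n$ permutes the valuations $\nu_D$ with $D\notin\Sigma$, i.e.\ only if $\psi$ contracts no prime divisor \emph{outside} $\Sigma$ either; your proposed fix of passing to a model on which the members of $\Sigma$ are not contracted introduces new exceptional divisors and a new boundary whose orbit behaviour must then be re-examined, so the argument as written is circular. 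Third, even granting Case B, the orbit of $x$ is only known to meet each boundary divisor in finitely many points, not to avoid $\bigcup_{D\in\Sigma}D$ altogether, so $x$ need not define a maximal ideal of your $S$ and property (v) is not established. All three difficulties disappear if the dichotomy is taken on the point orbit, as in the paper; I would reorganize the proof around that. (Your preliminary reduction to $\sigma$ of infinite order is fine and is indeed implicitly needed, since for finite-order $\sigma$ neither alternative of the proposition can hold.)
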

\begin{proof}
Let $X$ be a normal projective variety defined over $k$ with the property that $k(X)=K$.  Then the map $\sigma$ induces a birational map $\phi:X\dra X$.  By Theorem \ref{c: amerik}, we have that there is some point $x\in X(\bar{k})$ such that the two-way orbit of $x$ under $\phi$ is infinite (and in particular $\phi$ is defined at each point in the orbit).  Let $U$ be a affine open subset containing infinitely many points of the orbit of $x$ under $\phi$ and let $Y_1,\ldots , Y_d$ be a finite set of codimension one subvarieties that cover $X\setminus U$ but such that $x$ is not in any of the $Y_i$.  Then
each $Y_i$ gives a rank one discrete valuation $\nu_i$ of $K$. 
 
By Corollary \ref{c: amerik} there is some $n\ge 1$ such that if $\mathcal{O}$ denotes the two-way infinite orbit of $x$ under $\phi^n$, then the set of points in $\mathcal{O}$ that are in $Y_i$ is finite (since $x\not\in Y_i$).  
We take a rational function $f$ that vanishes on $Y_i$ but not at $x$.  Suppose first that there is some point $x'\in \mathcal{O}\cap Y_i$.   Then $Y_i$ induces a valuation $\nu_i$ and we have that $\nu_i(\sigma^{nj}(f))>0$ for $j=0$.  We note that $\nu_i(\sigma^{nj}(f))>0$ then $\sigma^{nj}(f)$ vanishes on $Y_i$ and hence vanishes at $x'$.  In particular, we have that $f$ vanishes at $\sigma^{-nj}(x')$.  Since $x$ is of the form $\sigma^{-nj}(x')$ for some $j$ and $f$ does not vanish at $x$, we see by Corollary \ref{c: amerik} that the set of $j$ for which $\sigma^{-nj}(x')$ is in the zero locus of $f$ is finite.  Thus we see that
$\nu_i(\sigma^{nj}(f))\le 0$ for all but finitely many $j$.  A similar argument using $1/f$ now shows that $\nu_i(\sigma^{nj}(f))\ge 0$ for all but finitely many $j$.  Thus if $\mathcal{O}\cap Y_i$ is non-empty for some $i$ then we see that there exists $f\in K$ and a valuation $\nu:K^*\to \mathbb{Z}$ such that $\nu(f)\neq 0$ and $\nu(\sigma^{nj}(f))=0$ for all but finitely many $j$, as claimed.  

Hence we may assume that $\mathcal{O}$ is completely contained in the affine set $U$.
Now let $S$ denote the set of elements of $K$ that are regular at all points of $\mathcal{O}$.  Then since $\mathcal{O}$ is contained in $U$, we see that $S$ is a localization of the set of functions in $K$ that are regular on $U$, which is a finitely generated $k$-algebra of Krull dimension $d$ whose field of fractions is $K$.  Moreover, $S$ has a localization consisting of the local ring, consisting of elements of $K$ that are regular at the point $x$.  This local ring has Krull dimension $d$.  (We note that the residue field of this local ring will not be equal to $k$ in general, but rather to a finite extension of $k$ since $x$ is a $\bar{k}$-point of $X$.)  Thus $S$ is a localization of a noetherian $k$-algebra of Krull dimension $d$ and has a localization of Krull dimension $d$ and hence $S$ has Krull dimension $d$ and is noetherian and has field of fractions $K$.  Since the two-way orbit of $x$ is invariant under application of $\phi^n$ and $\phi^{-n}$, we see that the restriction of $\sigma^n$ to $S$ gives an automorphism of $S$.  We may replace $S$ by its integral closure if necessary and assume that $S$ is integrally closed.  Since an automorphism of a ring extends to its integral closure we see $\sigma^n$ still gives an automorphism of $S$.  Moreover, since integral closure commutes with localization, we have that $S$ is a localization of the integral closure of the set of functions on $K$ that are regular on $U$, which is a finitely generated $k$-algebra.  This proves (i)--(iv).

The point $x\in X$  corresponds to a maximal ideal $M_x$ of $S$ (by going up and by the correspondence between closed points and maximal ideals in an affine scheme), and by construction this maximal ideal has infinite order under $\sigma^n$, since $\sigma^n$ is the automorphism of $K$ induced by $\phi^n$.  This completes the proof of (v).
\end{proof}
We are now able to prove the automorphism case of Theorem \ref{thm: main}
\begin{thm}
\label{thm: X}
Let $K$ be a field of characteristic zero and let $\sigma$ be an automorphism of $K$ and let $k$ denote the fixed subfield of $\sigma$.  Then if $K$ is a finitely generated extension of $k$ and $\sigma$ has infinite order, then $K(x;\sigma)$ contains a free $k$-algebra on two generators whose valuations with respect to $x$ are nonnegative.  
\end{thm}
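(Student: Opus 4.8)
The plan is to bring the situation into the range of Proposition~\ref{prop: S} and then to feed the resulting dichotomy into the free-algebra criterion of \cite{BR1}.

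\emph{Reductions.} The fixed field $k$ need not be algebraically closed in $K$, so let $k_0$ be the algebraic closure of $k$ in $K$. Since $K$ is finitely generated over $k$, the extension $k_0/k$ is finite, $k_0$ is algebraically closed in $K$, and $K$ is finitely generated over $k_0$ of transcendence degree $d=\trdeg_k K$; moreover $d\ge 1$, for otherwise $K/k_0$ would be finite and $\sigma$ of finite order. As $\sigma$ permutes the finitely many conjugates over $k$ of each element of $k_0$, the restriction $\sigma|_{k_0}$ has finite order $m$, so $\sigma^m$ is a $k_0$-algebra automorphism of $K$ of infinite order. Writing $z=x^m$, the subring of $K[x;\sigma]$ generated by $K$ and $z$ is $K[z;\sigma^m]$, and passing to fraction fields gives an embedding $K(z;\sigma^m)\hookrightarrow K(x;\sigma)$ under which the $x$-valuation is $m$ times the $z$-valuation. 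Hence a free $k_0$-subalgebra of $K(z;\sigma^m)$ whose generators have nonnegative $z$-valuation produces a subalgebra of $K(x;\sigma)$ with nonnegative $x$-valuation, and being free over $k_0$ it is free over $k$ on the same two generators. Renaming, I may therefore assume that $k$ is algebraically closed in $K$ and that $\sigma$ is a $k$-algebra automorphism.

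\emph{Applying Proposition~\ref{prop: S}.} Let $n\ge1$ and $\rho:=\sigma^n$ be as provided by Proposition~\ref{prop: S}, and replace $x$ by $y=x^n$ exactly as above, so that it suffices to build a free $k$-subalgebra of $K(y;\rho)$ whose generators have nonnegative $y$-valuation. In the first alternative one is handed a discrete valuation $\nu$ of $K$ and $f\in K$ with $\nu(f)\ne0$ and $\nu(\rho^{j}f)=0$ for all but finitely many $j$. I would reduce the second alternative to this same shape. There one is handed a normal Noetherian ring $S$ with $\fract S=K$, on which $\rho$ acts with a maximal ideal $M$ of infinite orbit. Choose a discrete valuation $\nu$ dominating the local ring $S_M$ (one exists as $S$ is normal and Noetherian); then $\nu\ge 0$ on $S$ and, for $s\in S$, $\nu(s)>0$ exactly when $s\in M$. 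Pick $f\in M$ nonzero with $f$ not vanishing at some other point of the orbit of $M$. Since $\rho^{j}f\in S$ for all $j$, we have $\nu(\rho^{j}f)\ge 0$ throughout, with $\nu(\rho^{j}f)>0$ precisely when $f$ lies in $\rho^{-j}(M)$, i.e. when the corresponding orbit point lies on the zero locus of $f$. By Corollary~\ref{c: amerik} the set of such $j$ is either all of $\mathbb{Z}$ or finite, and the choice of $f$ excludes the former; hence $\nu(\rho^{j}f)=0$ for all but finitely many $j$, while $\nu(f)>0$. Thus both alternatives yield a discrete valuation $\nu$ and an $f\in K$ with $\nu(f)\ne0$ and $\nu(\rho^{j}f)=0$ for almost all $j$.

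\emph{The free algebra, and the main obstacle.} With this data I would invoke the criterion of \cite{BR1}: the pair $(\nu,f)$ is exactly the valuation-theoretic input it requires to conclude that $K(y;\rho)$ contains a free $k$-algebra on two generators, which can be taken inside $K[y;\rho]$ — of the form $y$ and a polynomial in $y$ with coefficients built from $f$ — so that both generators have nonnegative $y$-valuation. Tracing back through the two reductions then gives the desired free $k$-algebra in $K(x;\sigma)$ with nonnegative $x$-valuations. The delicate point, which is the content of the criterion of \cite{BR1}, is the verification that distinct noncommutative words in the two generators stay $k$-linearly independent. A single valuation does not separate them: because $\rho$ does not preserve $\nu$, the pair $(\nu,f)$ does not extend to a rank-two valuation of $K(y;\rho)$, and the valuations of words of a fixed degree take only boundedly many values. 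Independence is instead forced by the dynamical mechanism behind Corollary~\ref{c: amerik} — the $p$-adic analytic parametrization of the orbit together with Strassman's theorem — whereby any linear dependence among the words would manifest as a nonzero $p$-adic analytic function with infinitely many zeros. Making this passage from Proposition~\ref{prop: S} to the hypotheses of \cite{BR1} precise, and checking that the generators can be arranged to have nonnegative valuation, is the crux; the reductions above are routine.
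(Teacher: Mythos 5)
Your overall strategy---feed the dichotomy of Proposition~\ref{prop: S} into the valuation criterion of \cite{BR1}---matches the paper's, but your treatment of the second alternative is genuinely different. The paper never collapses alternative (b) into alternative (a); it argues by induction on the transcendence degree $d$. Given $S$, it splits into two cases: if some height-one prime $P$ has infinite orbit, the purely algebraic fact that infinitely many distinct height-one primes of a noetherian domain intersect in $(0)$ supplies the valuation data for $\nu_P$; if every height-one prime is periodic, it takes a periodic $P$ inside the infinite-orbit maximal ideal $M$, passes to $F(t;\mu)$ with $F=S_P/PS_P$ of strictly smaller transcendence degree, applies the induction hypothesis there, and lifts the free generators back through the subring $T=\{\nu_P\ge 0,\ \nu\ge 0\}$. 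The nonnegativity clause in the statement of the theorem exists precisely to make that lifting step work. Your idea---take a discrete valuation dominating $S_M$ itself and use the orbit dichotomy to show that $f\in M\setminus\rho^{j_0}(M)$ has $\nu(\rho^{j}f)=0$ for almost all $j$---bypasses the induction entirely. Note that the paper's algebraic argument genuinely fails at the level of $M$ (infinitely many distinct maximal ideals of height $\ge 2$ need not intersect in $(0)$), so the dynamical dichotomy is doing real work in your version; if it holds up, it is a cleaner proof in which the nonnegativity of the $x$-valuations comes for free from the shape of the \cite{BR1} generators rather than being a load-bearing induction hypothesis.

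There is, however, a gap as written. Corollary~\ref{c: amerik} asserts the finite-or-all-of-$\mathbb{Z}$ dichotomy only for one particular point $x$ whose existence it guarantees, while Proposition~\ref{prop: S}(v) hands you \emph{some} maximal ideal of infinite orbit with no stated link to that point. You therefore cannot invoke the dichotomy along the orbit of $M$ on the strength of the two statements alone: you must either strengthen (v) to record that $M=M_x$ for the very point $x$ of Corollary~\ref{c: amerik} and that the dichotomy holds along its orbit (which the proof of the proposition does deliver), or rerun that proof inside yours. This is fixable but it is a genuine dependence on the proof rather than the statement. Two smaller corrections: the generators produced by \cite[Theorem 2.2]{BR1} are $(1-y)^{-1}$ and $b(1-y)^{-1}$, which do \emph{not} lie in $K[y;\rho]$ (though they have $y$-valuation $0$, which is all you need); and your closing account of why the words are independent conflates the two ingredients---the $p$-adic/Strassman mechanism is used only to verify the valuation hypothesis, whereas the linear independence in \cite[Theorem 2.2]{BR1} is established by a purely algebraic argument about the non-solvability of $u-\rho(u)\in k+kb$, not by $p$-adic analysis.
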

\begin{proof}

We prove this by induction on the transcendence degree of $K$ over $k$.  If this transcendence degree is $0$ then $K$ is a finite extension of $k$ and so $\sigma$ has finite order and so the claim is vacuous.  We now assume that the claim holds whenever the transcendence degree is strictly less than $d$, with $d\ge 1$, and we consider the case when $K$ has transcendence degree $d$.  

By Proposition \ref{prop: S} there exists $n\ge 1$ such that, either there is some $f\in K^*$ and a valuation $\nu:K^*\to \mathbb{Z}$ such that $\nu(f)\neq 0$ and $\nu(\sigma^{nj}(f))=0$ for all but finitely many $j$, or there exists a $k$-subalgebra $S$ of $K$ having properties (i)--(iv) from the statement of Proposition \ref{prop: S}.  In the first case, we have that $K(x^n;\sigma^n)\subseteq K(x;\sigma)$ contains a free $k$-algebra generated by elements of the form $f(1-x^n)^{-1}$ and $(1-x^n)^{-1}$ (cf. \cite[Lemma 2.4 and Theorem 2.2]{BR1}), which gives the claim in this case.

Thus we assume the existence of a ring $S$ as in the statement of Proposition \ref{prop: S}.
Given a height one prime ideal $P$ of $S$, the ring $S_P$ is a discrete valuation ring since $S$ is integrally closed and noetherian and since the field of fractions of $S$ is equal to $K$, we have that this valuation gives a non-trivial discrete valuation $\nu_P$ of $K$.  There are two cases to consider.
\vskip 2mm
\emph{Case I:} There exists a height one prime ideal $P$ of $S$ such that the ideals $\{\sigma^{nj}(P)\colon j\in \mathbb{Z}\}$ are all distinct.
\vskip 2mm
In this case, we pick $f\in P$.  Since $S$ is noetherian, the intersection of infinitely many distinct height one prime ideals is $(0)$.  Consequently, the set of $j$ for which $f\in \sigma^{nj}(P)$ is finite.  In particular, the set of $j$ for which $\sigma^{nj}(f)\in P$ is finite.  It follows that $\nu_P(\sigma^{nj}(f))=0$ for all but finitely many $j$.   We now have (cf. \cite[Lemma 2.4]{BR1}) that there exists some $b\in K$ such that there are no solutions $u\in K$ to the equation $$u-\sigma^n(u)\in k+b.$$  
It now follows that $(1-x^n)^{-1}$ and $b(1-x^n)^{-1}$ generate a free $k$-subalgebra of $K(x^n;\sigma^n)\subseteq K(x;\sigma)$ \cite[Theorem 2.2]{BR1}. 
\vskip 2mm
\emph{Case II:} For every height one prime ideal $P$, there exists some $j=j(P)>0$ such that $\sigma^{nj}(P)=P$.
\vskip 2mm
Let $P$ be a height one prime that  is contained in a maximal ideal $M$ whose orbit is infinite under $\sigma^n$.  Such a maximal ideal $M$ exists by property (iv) of $S$.  Then by assumption there exists some $\ell>0$ such that $\tau:=\sigma^{n\ell}$ satisfies $\tau(P)=P$.  We then have that $S[x^{n\ell},x^{-n\ell},\tau] \cong S[t,t^{-1};\tau]$.  Since $P$ is $\tau$-invariant, the valuation $\nu_P$ extends to a valuation of $K[t;\tau]$ by declaring that
$\nu_P(a_0+\cdots +a_n t^n) = \min_{\{i\colon a_i\neq 0\}} \nu_P(a_i)$ for a nonzero element $a_0+\cdots +a_n t^n$ of $K[t;\tau]$.  Thus $\nu_P$ extends to $K(t;\tau)$.  We also have a valuation $\nu$ of $K[t;\tau]$ given by $\nu(a_it^i+\cdots +a_n t^n) =i$ when $a_i,\ldots ,a_n\in K$ and $a_i\neq 0$ (i.e., the valuation induced by the order of vanishing when $t=0$; we henceforth call this the \emph{valuation with respect} to $t$).  This then extends to a valuation of $K(t;\tau)$.   We now consider the subring $T$ of $K(t;\tau)$ consisting of all elements $f\in K(t;\tau)$ with 
$\nu_P(f)\ge 0$ and $\nu(f)\ge 0$.  Then we have a prime ideal $Q$ of $T$ consisting of all elements $f$ of $T$ for which $\nu_P(f)>0$.  Let $F=S_P/PS_P$.  Then $F$ is a finitely generated field of transcendence degree strictly less than $d$ (in fact, it is the field of fractions of the algebra $S/P$, which is finitely generated as an extension of $k$ since $S$ is a localization of a finitely generated $k$-algebra).  By assumption, $\tau$ induces an automorphism $\mu$ of $S_P/PS_P$
Then $T/Q \cong B\subseteq F(t;\mu)$.  We claim that $B$ is the subalgebra of $F(t;\mu)$ consisting of elements whose valuation with respect to $t$ is nonnegative.  To see this, observe that $K(t;\tau)$ embeds in the division ring of skew Laurent power series $K((t;\tau)$.  Then $T$ is just the set of elements of $K(t;\tau)$ whose Laurent series expansion lies in $S_P[[t;\tau]]\subseteq K((t;\tau))$.
Then the ideal $Q$ of $T$ corresponds the elements of $K(t;\tau)$ whose Laurent series expansion lies in $PS_P[[t;\tau]]$.  
Thus we have a map $T/Q\to (S_P/PS_P)[[t;\overline{\tau}]]=F[[t;\mu]]$.  Given an element $h(t)$ of $F(t;\mu)$ whose Laurent series expansion lives in $F[[t;\mu]]$, we see that we may write $h= a(t)b(t)^{-1}$ where $a(t),b(t)\in F[t;\mu]$ and $b(0)=1$.  Then by picking $\tilde{a}(t),\tilde{b}\in S_P[t;\tau]$ whose reductions mod $P$ are $a(t)$ and $b(t)$ respectively, it is straightforward to check that $\tilde{a}\tilde{b}^{-1}\in K(t;\tau)$ has a Laurent series expansion in $S_P[[t;\tau]]$ and its image mod $Q$ is equal to $h$.  Since the elements of $F(t;\mu)$ whose Laurent series expansion lives in $F[[t;\mu]]$ are exactly those elements whose valuations with respect to $t$ are nonnegative, we obtain the claim.

We now have $\mu$ is an infinite order automorphism of $F$ since the orbit of $M$ in $S$ is infinite and since $M\supseteq P$ and $P$ is $\tau$ invariant, we see $\tau^j(M)\supseteq P$ for every $j$.  Thus $\sigma^{n\ell}$ induces an infinite order automorphism of $S/P$ and hence the automorphism $\mu$ of $(S_P/P_P)=F$ is of infinite order.

Then since $F$ has transcendence degree strictly less than $d$, by our induction hypothesis, there exist elements $a,b\in B$ whose valuations with respect to $t$ are nonnegative and such that $a$ and $b$ generate a free $k$-algebra.  Then there exist $a',b'\in T$ with $\nu(a'),\nu(b')>0$ that map onto $a$ and $b$ respectively and hence $a'$ and $b'$ also generate a free $k$-subalgebra of $T$.  
This completes the proof.  
\end{proof}
We note that in the proof of Theorem \ref{thm: X}, the only way Case II can apply is if $d\ge 2$.  We now give our main result. 
\begin{cor}
\label{thm: main1}
Let $K$ be a field of characteristic zero, let $\sigma$ be an automorphism of $K$ and let $\delta$ be a $\sigma$-derivation of $K$.  Then either $D:=K(x;\sigma,\delta)$ is locally PI or $D$ contains a free algebra on two generators over its center.
\end{cor}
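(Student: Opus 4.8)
The plan is to reduce Corollary \ref{thm: main1} to the automorphism case settled in Theorem \ref{thm: X}, after disposing of the derivation case, and then to bridge the gap between an arbitrary $K$ and the finitely generated hypothesis of Theorem \ref{thm: X} by producing the valuation data of \cite{BR1} directly.

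First I would split according to whether $\sigma=\mathrm{id}$. If $\sigma\neq\mathrm{id}$, choose $t\in K$ with $\sigma(t)\neq t$ and set $c=\delta(t)/(t-\sigma(t))$. The map $r\mapsto cr-\sigma(r)c$ is a $\sigma$-derivation agreeing with $\delta$ at $t$, and if $\eta$ denotes their difference then $\eta(t)=0$; comparing $\eta(rt)$ with $\eta(tr)$ gives $\eta(r)(t-\sigma(t))=0$, whence $\eta\equiv 0$. Thus $\delta$ is inner, and the substitution $y=x-c$ turns $K[x;\sigma,\delta]$ into $K[y;\sigma]$, so $D\cong K(y;\sigma)$. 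If instead $\sigma=\mathrm{id}$, then $\delta$ is an ordinary derivation and $D=K(x;\delta)$; here $D$ is commutative, hence PI, when $\delta=0$, while for $\delta\neq0$ the division ring $K(x;\delta)$ contains a free algebra on two generators over its center by the classical results of Makar-Limanov \cite{ML} (cf.\ \cite{BR1}). This settles the derivation case for arbitrary $K$ and leaves $D=K(y;\sigma)$.

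For $D=K(x;\sigma)$ I would first record the dichotomy that $D$ is locally PI if and only if every element of $K$ has finite $\sigma$-orbit. If all orbits are finite, any affine subalgebra lies in $K_0(x;\sigma)$ for a finitely generated $\sigma$-invariant subfield $K_0$ on which $\sigma$ has finite order, and such a ring is finite dimensional over its center, hence PI; in particular this covers the case that $\sigma$ itself has finite order. Conversely, if some $a\in K$ has infinite orbit, the affine subalgebra generated by $a$, $x$, and $x^{-1}$ contains the skew Laurent ring over $\mathbb{Q}[\sigma^i(a):i\in\mathbb{Z}]$ with $\sigma$ of infinite order, which is not PI. This reduces matters to showing that an infinite orbit forces a free subalgebra; and since $\sigma$ then has infinite order, one checks directly that $Z(D)=K^\sigma=:k$, so it suffices to produce a free algebra over $k$.

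The engine is the criterion of \cite[Lemma 2.4 and Theorem 2.2]{BR1}: it is enough to find $n\ge1$, an element $f\in K^*$, and a discrete valuation $\nu$ of $K$ with $\nu(f)\neq0$ and $\nu(\sigma^{nj}(f))=0$ for all but finitely many $j$, for then $(1-x^n)^{-1}$ and $f(1-x^n)^{-1}$ generate a free $k$-algebra inside $K(x^n;\sigma^n)\subseteq D$, free over $k=Z(D)$. The hard part is producing this data when $K$ is not finitely generated over $k$, so that Theorem \ref{thm: X} (and with it Corollary \ref{c: amerik}) cannot be applied on the nose. I would control this through the growth of $d_n:=\trdeg_{\mathbb{Q}}\mathbb{Q}(\sigma^i(a):|i|\le n)$ for an element $a$ of infinite orbit. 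When $(d_n)$ stabilizes, the two-sided orbit is Zariski dense in a fixed-dimensional variety carrying an infinite-order birational self-map; passing to a finitely generated model puts us back in the situation of Proposition \ref{prop: S} and Theorem \ref{thm: X}, which then supply $(f,\nu)$ on $K$. When $(d_n)$ is unbounded the orbit spreads, and I would build $\nu$ by hand as a Gauss-type extension of the trivial valuation: after spacing the orbit by replacing $\sigma$ with a power, one isolates an orbit element transcendental over the subfield generated by the remaining relevant orbit elements, exactly as $f=t_0$, $\nu=\nu_{t_0}$ works for the shift automorphism of $\mathbb{Q}(t_i:i\in\mathbb{Z})$. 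The delicate point, and the one I expect to absorb most of the work, is disentangling the possible algebraic relations among the orbit elements---through a careful choice of $a$, of the power $n$, and of the valuation---so that such a transcendental, valuation-separated element genuinely exists in the unbounded case, and so that the finite generation and invariance needed to invoke Theorem \ref{thm: X} can be arranged in the bounded case.
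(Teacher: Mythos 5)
Your overall architecture matches the paper's: reduce to the pure automorphism or pure derivation case (your inner-derivation computation is exactly the content of \cite[Lemma 5.1]{BR1}, which the paper cites), dispose of the derivation case via Makar-Limanov/\cite[Theorem 4.1]{BR1}, invoke Theorem \ref{thm: X} when $K$ is finitely generated over the fixed field, and then handle the remaining non-finitely-generated situation by a dichotomy on how the $\sigma$-orbit of a single element sits inside $K$. The derivation case and the finitely generated automorphism case are fine.

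The gap is in your treatment of the non-finitely-generated case, in two respects. First, your dichotomy is framed in terms of the boundedness of $d_n=\trdeg_{\mathbb{Q}}\mathbb{Q}(\sigma^i(a):|i|\le n)$, but bounded transcendence degree does not imply finite generation (consider $\mathbb{Q}(2^{1/2},2^{1/3},2^{1/4},\dots)$, of transcendence degree zero and not finitely generated), so your ``bounded'' branch does not actually put you in a position to apply Proposition \ref{prop: S} and Theorem \ref{thm: X}, which require a finitely generated extension. The correct split, and the one the paper uses, is on whether every extension $k(\sigma^i(a):i\in\mathbb{Z})$ is finitely generated over $k=K^\sigma$: if all are, $K$ is a directed union of finitely generated $\sigma$-invariant extensions and one reduces to Theorem \ref{thm: X}; otherwise some single orbit generates a non-finitely-generated extension of $k$. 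Second, in your ``unbounded'' branch you only sketch the construction of the pair $(f,\nu)$ (a Gauss-type valuation isolating a ``transcendental, valuation-separated'' orbit element) and you explicitly flag that establishing the existence of such an element in the presence of algebraic relations among the orbit elements is the hard, undone part. That is precisely the content of \cite[Proposition 3.2]{BR1}, which the paper cites to close exactly this case; without that citation or a genuine argument replacing it, your proof is incomplete at the one point where all the remaining difficulty is concentrated.
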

\begin{proof}
First, we have that $D\cong K(x;\tau)$ or $D\cong K(x;\mu)$, where $\tau$ is an automorphism of $D$ and $\mu$ is a derivation of $K$ \cite[Lemma 5.1]{BR1}.  In the case when $\mu$ is a derivation we obtain the result from \cite[Theorem 4.1]{BR1}.  In the automorphism case, if $K$ is finitely generated over $k$, we have just shown that there exist $a,b\in K(x;\tau)$ whose valuations with respect to $x$ are nonnegative that generate a free $k$-algebra unless $K(x;\tau)$ is PI, where $k=K^{\tau}$.  When $K$ is not finitely generated over $k$, then either $K$ is a direct limit of finitely generated $\tau$-invariant extensions of $k$, in which case we can reduce to the finitely generated case and we get that $K(x;\tau)$ is either locally PI or contains a free $k$-algebra on two generators; alternatively, there exists a non-finitely generated extension $K_0$ of $k$ generated by $\{\sigma^n(a)\colon n\in \mathbb{Z}\}$ for some $a\in K$, in which case we obtain the result from \cite[Proposition 3.2]{BR1}.
\end{proof}
\section{An application to division rings}
We now prove Theorem \ref{thm: main0} (see Theorem \ref{thm: main2}).  We begin with a lemma.
\begin{lem} Let $D$ be a division ring over a field $k$ and let $G\le D^*$ be a finitely generated solvable group.  If the subalgebra of $D$ generated by $G$ satisfies a polynomial identity then $G$ is abelian-by-finite. \label{lem1}
\end{lem}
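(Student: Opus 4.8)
The plan is to use the structure theory of prime PI rings to transport the problem into a finite-dimensional division algebra, where the theory of solvable linear groups takes over. Let $A$ denote the $k$-subalgebra of $D$ generated by $G$. Since $A$ is a subring of the division ring $D$ it is a domain, hence prime, and by hypothesis it satisfies a polynomial identity. Posner's theorem then applies: the central localization $Q:=A\bigl(Z\setminus\{0\}\bigr)^{-1}$, where $Z$ is the center of $A$, is a central simple algebra that is finite-dimensional over its center $F:=\fract(Z)$, and being a domain it is in fact a division ring. As $Z\setminus\{0\}$ is invertible in $D$, we have $G\le Q^{*}\le D^{*}$. This is the essential reduction: the ambient division ring has been replaced by one that is finite-dimensional over its center.

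Next I would linearize and triangularize. Writing $\dim_{F}Q=n^{2}$ and extending scalars to an algebraic closure $\overline{F}$ of $F$ yields an injective $F$-algebra homomorphism $Q\hookrightarrow Q\otimes_{F}\overline{F}\cong M_{n}(\overline{F})$, under which $G$ becomes a solvable subgroup of $\mathrm{GL}_{n}(\overline{F})$. Its Zariski closure $\overline{G}$ is then a solvable linear algebraic group (solvability of $G$ passes to $\overline{G}$ with the same derived length, since $[\overline{G},\overline{G}]\subseteq\overline{[G,G]}$). The identity component $\overline{G}^{\circ}$ has finite index in $\overline{G}$, so $H:=G\cap\overline{G}^{\circ}$ has finite index in $G$, and by the Lie--Kolchin theorem $\overline{G}^{\circ}$ is triangularizable; after conjugation we may assume $H$ consists of upper-triangular matrices.

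Finally I would exploit that $Q$ is a domain. Any commutator of upper-triangular invertible matrices has all diagonal entries equal to $1$, hence is unipotent, so each $g\in[H,H]$ maps to a matrix $U$ with $(U-I)^{n}=0$. Pulling this back through the injection $Q\hookrightarrow M_{n}(\overline{F})$ and using that $\overline{F}/F$ is faithfully flat gives $(g-1)^{n}=0$ in $Q$; since $Q$ is a domain this forces $g=1$. Thus $[H,H]=\{1\}$, so $H$ is abelian, and as $H$ has finite index in $G$ we conclude that $G$ is abelian-by-finite.

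The crux of the argument is the interplay between the two structural inputs: Posner's theorem supplies a \emph{faithful} finite-dimensional linear representation of $G$ over a field (so that Lie--Kolchin is available), while the \emph{domain} property of $Q$ is precisely what kills the unipotent part surviving after triangularization. Neither ingredient alone suffices, and I expect verifying the unipotent-to-trivial step cleanly—ensuring the nilpotency relation descends from $M_{n}(\overline{F})$ back into $Q$—to be the most delicate point. I note that the hypothesis that $G$ be finitely generated guarantees that $A$ is an affine $k$-algebra but is not otherwise needed for the argument.
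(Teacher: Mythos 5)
Your proof is correct and takes essentially the same route as the paper's: reduce via PI structure theory to a faithful representation of $G$ as a solvable subgroup of $\mathrm{GL}_n$ over an algebraically closed field, triangularize a finite-index subgroup $H$ by Lie--Kolchin (the paper simply cites Wehrfritz's version of this for the finite-index statement, where you rederive it via Zariski closure and the identity component), and then use the domain property of the ambient ring to force $H$ to be abelian. The only cosmetic difference is that you kill multiplicative commutators via unipotence, while the paper kills additive commutators $h_1h_2-h_2h_1$ via nilpotence; both hinge on the same point that a domain has no nonzero nilpotents.
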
 
\begin{proof}
Let $R$ be the subalgebra generated by $G$.  Then $R$ is finitely generated and hence if $R$ is PI then $Q(R)$ embeds in a matrix ring over an algebraically closed field.  Since $G$ embeds in $R$ we see that $G$ embeds in a matrix ring over an algebraically closed field.  By a version of the Lie-Kolchin theorem \cite[Theorem 5.8]{Wehrfritz}, $G$ has a finite-index subgroup $H$ such that the elements of $H$ are simultaneously triangularizable.  It follows that if $h_1,h_2\in H$ then $h_1h_2-h_2h_1\in R$ is nilpotent.  Since $R$ is a domain, we see that $h_1h_2=h_2h_1$ in $R$ and so $H$ is abelian.  The result follows.
\end{proof}
  \begin{prop} Let $E$ be a locally PI division algebra over a field $k$ that is either of characteristic zero, or is uncountable and of positive characteristic.  If $E(x;\sigma)$ is not locally PI then $E(x;\sigma)$ contains a free $k$-algebra on two generators.
  \label{prop1}
  \end{prop}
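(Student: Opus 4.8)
The plan is to reduce everything to the field case already established in Theorem \ref{thm: X} and Corollary \ref{thm: main1}. Since $E$ is a locally PI division algebra, each element of $E$ generates together with the center $F:=Z(E)$ a finite-dimensional, hence PI, subalgebra, so $E$ is algebraic over $F$; note that $\sigma(F)=F$ and, as $\sigma$ fixes $k$, that $k\subseteq F$. The governing dichotomy is the order of $\sigma$ modulo inner automorphisms. Suppose first that some power $\sigma^N$ is inner, say $\sigma^N=\mathrm{ad}(u)$ with $u\in E^*$. Then $z:=u^{-1}x^N$ centralizes $E$: from $x^N a=\sigma^N(a)x^N=uau^{-1}x^N$ one reads off $za=az$ for all $a\in E$. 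Moreover $x$ normalizes the sub-division-ring $C$ generated by $E$ and $z$, and $E(x;\sigma)=C+Cx+\cdots+Cx^{N-1}$ is finite-dimensional (rank $N$) over $C$. Here $C$ is locally PI: a finitely generated subalgebra has all its $E$-coefficients in a finite-dimensional central simple $F$-subalgebra $A$, hence lies in $A\otimes_F F(z)$, which is finite-dimensional over the field $F(z)$ and so PI. Since a division ring finite-dimensional over a locally PI sub-division-ring is again locally PI (a finitely generated subalgebra embeds, via right multiplication, into $M_N$ of a finitely generated, hence PI, subalgebra of $C$), I conclude that $E(x;\sigma)$ is locally PI.

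Consequently, if $E(x;\sigma)$ is \emph{not} locally PI then no power of $\sigma$ is inner, and I would split into two cases. If $\bar\sigma:=\sigma|_F$ already has infinite order, then $F$ and $x$ generate a copy of $F(x;\bar\sigma)$ inside $E(x;\sigma)$, and it suffices to produce a free $k$-algebra there. When $F$ is finitely generated over its fixed field $F^{\bar\sigma}$ this is precisely Theorem \ref{thm: X} in characteristic zero and the uncountable-base-field result of \cite{BR1} in the uncountable positive-characteristic case; note that $E$ uncountable forces $F$ uncountable, since $E$ is algebraic over $F$. The general, non-finitely-generated case reduces to this one by the same direct-limit argument used in the proof of Corollary \ref{thm: main1}. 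A free algebra over $F^{\bar\sigma}\supseteq k$ is in particular free over $k$.

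The remaining and most delicate case is that $\bar\sigma$ has finite order $m$ while no power of $\sigma$ is inner; then $\theta:=\sigma^m$ fixes $F$ pointwise yet has infinite order modulo inner automorphisms. Replacing $\sigma$ by $\theta$ and $x$ by $x^m$ (again a finite-rank reduction, harmless for local PI-ness), I may assume $\theta$ fixes $F$. I would choose a finitely generated subalgebra of $E(x^m;\theta)$ that fails to be PI, let $T\subseteq E$ be the finite set of coefficients of its generators, and let $B$ be the $F$-subalgebra of $E$ generated by the whole orbit $\{\theta^\ell(t):t\in T,\ \ell\in\Z\}$. Then $B$ is $\theta$-invariant and locally PI, and the chosen subalgebra lies in $B[x^{\pm m};\theta]$. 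The point is to examine the induced automorphism of the field $Z(B)$: using Skolem--Noether on the finite-dimensional central simple subalgebras of $B$ exactly as in the first paragraph, one checks that if $\theta|_{Z(B)}$ had finite order then $B[x^{\pm m};\theta]$ would be locally PI, contrary to the choice of a non-PI subalgebra. Hence $\theta|_{Z(B)}$ has infinite order, so $K:=Z(B)$ is a $\theta$-invariant subfield of $E$ containing $F\supseteq k$ on which the induced automorphism has infinite order. Applying the field case of the previous paragraph to $K(x^m;\theta|_K)\subseteq E(x;\sigma)$ then yields the desired free $k$-algebra.

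The main obstacle is exactly the passage carried out in the third paragraph: extracting, from the mere failure of local PI-ness, a commutative subfield of $E$ on which a power of $\sigma$ acts with infinite order. Proving that $\theta|_{Z(B)}$ of finite order forces $B[x^{\pm m};\theta]$ to be locally PI requires combining Skolem--Noether (to control the inner part of $\theta$ on each finite-dimensional central simple subalgebra of $B$) with the finite-over-locally-PI principle of the first paragraph, and one must argue that this reduction terminates rather than recursing indefinitely through an ascending chain of centers $F\subseteq Z(B)\subseteq\cdots$. Once this structural lemma is secured, the remainder is a direct appeal to Theorem \ref{thm: X}, Corollary \ref{thm: main1}, and the uncountable results of \cite{BR1}.
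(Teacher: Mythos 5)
Your proposal has two genuine gaps, and the second is fatal to the overall strategy. First, the structural claim underlying your opening paragraph --- that a locally PI division algebra is algebraic over its center $F$, so that any finite subset of $E$ lies in a finite-dimensional central simple $F$-subalgebra --- is false. The example at the end of \S 4 of the paper already refutes it: for $E=\mathbb{C}(x_1,x_2,\ldots)$ with $\sigma(x_i)=\omega_i x_i$, the division ring $D=E(t;\sigma)$ is locally PI, its center is the fixed field $E^{\sigma}$, and $t$ satisfies no polynomial over $E^{\sigma}$ (a relation $\sum_i a_i t^i=0$ with $a_i\in E^{\sigma}$ forces every $a_i=0$, since $E[t;\sigma]$ is a free left $E$-module on the powers of $t$). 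So ``locally PI'' does not imply ``locally finite over the center,'' and your verification that $C$ is locally PI does not go through as written (the conclusion can be salvaged, e.g.\ via $C=Q(E[z])$ with $z$ central, but not by the argument you give).

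Second, and more seriously: paragraphs two and three both aim to locate a $\sigma$-stable subfield ($F$, or $K=Z(B)$) on which the induced automorphism has infinite order, and then to harvest the free algebra from the field case. But Theorem \ref{thm: X} and Corollary \ref{thm: main1} only deliver the dichotomy ``locally PI or free'': an infinite-order automorphism of a non-finitely-generated field can perfectly well give a locally PI skew field, and the same example shows it ($\sigma$ has infinite order on $\mathbb{C}(x_1,x_2,\ldots)$, yet $E(t;\sigma)$ is locally PI). Thus in your second paragraph, when $F(x;\bar\sigma)$ turns out to be locally PI you can conclude nothing about $E(x;\sigma)$; the paper escapes this only when $E$ is PI, since then $E$ is a finite module over its center and local PI-ness passes up to $E[x^{\pm 1};\sigma]$ --- you have no such finiteness. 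The same objection applies to $K(x^m;\theta|_K)$ in your third paragraph, in addition to the fact that your key ``structural lemma'' ($\theta|_{Z(B)}$ of finite order forces $B[x^{\pm m};\theta]$ locally PI) is left unproved and, as you yourself note, runs into the ascending-centers problem (Skolem--Noether requires $\theta$ to fix the center of each finite-dimensional piece, not merely the smaller field $Z(B)$). The idea you are missing is the paper's treatment of the case where $E$ is locally PI but not PI: there one does not reduce to a subfield at all. One arranges that $E$ is generated by the $\sigma$-orbits of finitely many elements $b_i$ and shows that if some forward or backward orbit generates a non-finitely-generated division subalgebra, then $b_1x$ and $\sigma(b_1)x^2$ already generate a free $k$-algebra (via a weight and grading argument), while if all such orbits generate finitely generated division algebras then $E$ itself is a finitely generated, hence PI, division algebra, which returns you to the central case.
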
 
\begin{proof} We first consider the case when $E$ is PI.  In this case, $\sigma$ restricts to an automorphism of the center $Z$ of $E$.  If $Z$ has characteristic zero, then we have that $Z(x;\sigma)$ contains a free algebra on two generators unless it is locally PI by Corollary \ref{thm: main1}.  If $k$ is uncountable, then the same result holds \cite[Theorem 1.3]{BR1}.  If $Z(x;\sigma)$ contains a free algebra on two generators then so does $E(x;\sigma)$.  On the other hand, if $Z(x;\sigma)$ does not contain a free algebra, then $Z(x;\sigma)$ is locally PI.  Since $E$ is PI it is finite-dimensional over $Z$ and so $E[x,x^{-1};\sigma]$ is locally PI as it is a finite module over $Z[x,x^{-1};\sigma]$.  But then $E(x;\sigma)$ is locally PI as the quotient division ring of a locally PI ring.  Thus we obtain the result in this case. 

 So now we suppose that $E$ is locally PI and not PI and that $E(x;\sigma)$ is not locally PI.  Since $E(x;\sigma)$ is not locally PI, neither is $E[x,x^{-1};\sigma]$, and so there exists a finite subset $\{b_1,\ldots ,b_m\}\subseteq E$ such that the subalgebra of $E[x,x^{-1};\sigma]$ generated by $b_1,\ldots ,b_m, x, x^{-1}$ is not PI.   We may assume without loss of generality that $E$ is generated by 
$\{\sigma^j(b_i)\colon j\in \mathbb{Z}, i\in \{1,\ldots ,m\}\}$.  

Now let $E_{i,+}$ denote the division subalgebra of $E$ generated by $\sigma^j(b_i)$ with $j\ge 0$.  We claim that $E[x,x^{-1};\sigma]$ contain a free algebra on two generators unless each $E_{i,+}$ is finitely generated as a division algebra.  To see this, suppose that $E[x,x^{-1};\sigma]$ does not contain a free algebra and that $E_{i,+}$ is not finitely generated for some $i$.  By relabelling if necessary, we may assume that $i=1$.  Then for each $j\ge 1$ we have that $\sigma^j(b_1)\not \in k(b_1,\sigma(b_1),\ldots ,\sigma^{j-1}(b_1))$, the division subalgebra of $E$ generated over $k$ by $b_1,\ldots ,\sigma^{j-1}(b_i)$, since if this were the case by induction we would have that $E_{1,+}$ is generated by $b_1,\ldots ,\sigma^{j-1}(b_1)$ as a division algebra and hence would be finitely generated.  We now claim that the subalgebra of $E[x,x^{-1};\sigma]$ generated by $b_1x, \sigma(b_1)x^2$ is free.  To see this, observe that if $(i_1,\ldots ,i_m)\in \{1,2\}^m$ then 
$$\sigma^{i_1-1}(b_{1}) x^{i_1} \sigma^{i_2-1}(b_1) x^{i_2}\cdots \sigma^{i_m-1}(b_1) x^{i_m}$$
is equal to
$$\sigma^{i_1-1}(b_1) \sigma^{i_1+i_2-1}(b_1)\cdots \sigma^{i_1+\cdots +i_m-1}(b_1) x^{i_1+\cdots +i_m}.$$
Now define ${\rm wt}(i_1,\ldots ,i_m)$ to be $\sum i_j$ and define 
$$b(i_1,\ldots ,i_m):= \sigma^{i_1-1}(b_1) \sigma^{i_1+i_2-1}(b_1)\cdots \sigma^{i_1+\cdots +i_{m-1}-1}(b_1) $$ for $(i_1,\ldots ,i_m)\in \{1,2\}^m$. (Notice that we have omitted the final term $\sigma^{i_1+\cdots +i_m-1}(b_1)$ from the product, as it is completely determined by the weight of the string.)
Then by considering the $\mathbb{Z}$-grading on $E[x,x^{-1};\sigma]$, to show that $b_1 x$ and $\sigma(b_1) x^2$ is free it is enough to show that for a fixed natural number $n$ that
$\{ b(i_1,\ldots ,i_m)\colon m\ge 1, {\rm wt}(i_1,\ldots ,i_m)=n\}$ is linearly independent over $k$.  Without loss of generality, we may pick $n$ minimal with respect to this property.
Now suppose that we have some nontrivial linear combination that is equal to zero:
$$\sum_{m\ge 1} \sum_{{\rm wt}(i_1,\ldots ,i_m)=n} c_{i_1,\ldots ,i_m} b(i_1,\ldots ,i_m) \ = \ 0.$$
Among all strings $(i_1,\ldots ,i_m)$ with $c_{i_1,\ldots ,i_m}$ nonzero, we let $N<n$ be the maximum value of $i_1+\cdots + i_{m-1}$.  
Then  
we have
\begin{eqnarray*}
&~& \sum_{m\ge 1} \sum_{{\rm wt}(i_1,\ldots ,i_m)=n,~i_1+\cdots + i_{m-1} = N} c_{i_1,\ldots ,i_m} b(i_1,\ldots ,i_{m-1}) \sigma^{N-1}(b_1) \\ &  = & - \sum_{m\ge 1} \sum_{{\rm wt}(i_1,\ldots ,i_m)=n,~i_1+\cdots + i_{m-1} < N} c_{i_1,\ldots ,i_m} b(i_1,\ldots ,i_{m}).\end{eqnarray*}
Now
$$\sum_{m\ge 1} \sum_{{\rm wt}(i_1,\ldots ,i_m)=n,~i_1+\cdots + i_{m-1} < N} c_{i_1,\ldots ,i_m} b(i_1,\ldots ,i_{m})$$ and 
$$ \sum_{m\ge 1} \sum_{{\rm wt}(i_1,\ldots ,i_m)=n,~i_1+\cdots + i_{m-1} = N} c_{i_1,\ldots ,i_m} b(i_1,\ldots ,i_{m-1})$$
are in $k(b_1,\sigma(b_1),\ldots ,\sigma^{N-2}(b_1))$ and if the latter one is nonzero, then we have that $\sigma^{N-1}(b_1)$ is also in the division algebra generated by $b_1,\ldots ,\sigma^{N-2}(b_1)$, which we have assumed is not the case.  It follows that these must both be zero and so we have
 $$ \sum_{m\ge 1} \sum_{{\rm wt}(i_1,\ldots ,i_m)=n,~i_1+\cdots + i_{m-1} = N} c_{i_1,\ldots ,i_m} b(i_1,\ldots ,i_{m-1})=0.$$
But by our choice of $N$ we have that some $c_{i_1,\ldots ,i_m}$ appearing in this sum is nonzero and so this now gives that 
$\{ b(i_1,\ldots ,i_m)\colon m\ge 1, {\rm wt}(i_1,\ldots ,i_m)=N\}$ is linearly dependent and $N<n$, contradicting the minimality of $n$.   We thus see that we may assume that each $E_{i,+}$ is finitely generated.  A similar argument shows that we may also assume that each $E_{i,-}$ is finitely generated, where we now use backward orbits.  It follows that we may assume that $E$ is finitely generated as a division algebra and since $E$ is locally PI, we then get that $E$ is PI, which we assumed not to be the case.  This completes the proof.  
\end{proof}

We will also need the following result \cite{GS96}.
\begin{thm}\label{T:FreeAlgD} Let $D$ be a division ring with uncountable center $k$. If $D$ contains a $k$-free algebra of rank $2$, then $D$ contains the $k$-group algebra of the free group of rank $2$.
\end{thm}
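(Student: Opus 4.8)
The plan is to realise the free group algebra as the image of an injective evaluation map, using the given free algebra to build the generators and the uncountability of $k$ to choose a good scalar. Let $a,b\in D$ generate a free $k$-algebra $k\langle a,b\rangle$, let $F=\langle g,h\rangle$ be the free group of rank two, and for $\lambda\in k$ set $u_\lambda=1+\lambda a$ and $v_\lambda=1+\lambda b$. These are units of $D$ for every $\lambda\in k$, since $1+\lambda a=0$ would force $a=-\lambda^{-1}\in k$, contradicting freeness. Let $\mathrm{ev}_\lambda\colon k[F]\to D$ be the $k$-algebra homomorphism with $g\mapsto u_\lambda$ and $h\mapsto v_\lambda$. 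If $\mathrm{ev}_{\lambda_0}$ is injective for some $\lambda_0$, then distinct reduced words of $F$ map to $k$-linearly independent elements of $D$, so $\langle u_{\lambda_0},v_{\lambda_0}\rangle$ is free of rank two and its $k$-group algebra, isomorphic to $k[F]$, sits inside $D$. Thus it suffices to produce a single $\lambda_0$ making $\mathrm{ev}_{\lambda_0}$ injective.

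I would reduce this to a finite, then a generic, question. Injectivity of $\mathrm{ev}_{\lambda_0}$ is equivalent to: for every finite set $S=\{W_1,\dots,W_r\}$ of distinct elements of $F$, the elements $\mathrm{ev}_{\lambda_0}(W_1),\dots,\mathrm{ev}_{\lambda_0}(W_r)$ are $k$-linearly independent. Since $F$ is countable there are only countably many such $S$, so it is enough to show that for each fixed $S$ the bad set $B_S=\{\lambda\in k:\ \mathrm{ev}_\lambda(W_1),\dots,\mathrm{ev}_\lambda(W_r)\text{ are }k\text{-dependent}\}$ is finite; then $\bigcup_S B_S$ is countable and, as $k$ is uncountable, any $\lambda_0$ outside this union works. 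To analyse a fixed $S$ I would introduce a central indeterminate $\lambda$ and work in the division ring $D(\lambda)=\mathrm{Frac}(D[\lambda])$ (note $D[\lambda]$ is a noncommutative PID, hence Ore), together with its embedding $D(\lambda)\hookrightarrow D((\lambda))$ into skew Laurent series. In $D(\lambda)$ the elements $U=1+\lambda a$ and $V=1+\lambda b$ are invertible, so each word gives $\omega_i=W_i(U,V)\in D(\lambda)$, and under $D(\lambda)\hookrightarrow D((\lambda))$ these are exactly the images $\Phi(W_i)$ of the Magnus-type map $\Phi\colon k[F]\to D((\lambda))$, $g\mapsto 1+\lambda a$, $h\mapsto 1+\lambda b$; moreover evaluating $\omega_i$ at any $\lambda_0\in k$ recovers $\mathrm{ev}_{\lambda_0}(W_i)$, since the factors stay invertible.

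The two technical inputs are then a rigidity statement for $\Phi$ and a generic-evaluation statement. For the first I would expand in $\lambda$: since $\Phi(g)=1+\lambda a$ and $\Phi(g^{-1})=\sum_{n\ge0}(-\lambda a)^n$, the coefficient of $\lambda^n$ in $\Phi(\xi)$ is homogeneous of degree $n$ in $a,b$ and, under $k\langle a,b\rangle\cong k\langle X,Y\rangle$, equals the degree-$n$ component of the classical Magnus image of $\xi$ in $k\langle\langle X,Y\rangle\rangle$. As the Magnus map $k[F]\hookrightarrow k\langle\langle X,Y\rangle\rangle$ is injective, $\Phi$ is injective, so the $\Phi(W_i)$ are $k$-linearly independent. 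I would then upgrade this to $k(\lambda)$-linear independence using the \emph{double} grading: a relation $\sum_i f_i(\lambda)\Phi(W_i)=0$ with $f_i\in k[\lambda]$ splits, after extracting the coefficient of each $\lambda^N$ and then separating homogeneous $a,b$-degrees, into the relations $\sum_i f_{i,m}(\Phi W_i)_n=0$ for all $m,n$, where $f_{i,m}$ is the $\lambda^m$-coefficient of $f_i$; summing over $n$ for fixed $m$ gives $\sum_i f_{i,m}\Phi(W_i)=0$ with scalar coefficients $f_{i,m}\in k$, whence $f_{i,m}=0$ by the $k$-independence just proved, so all $f_i=0$. For the second input, with $\omega_i$ now known to be $k(\lambda)$-independent, I would take a common right denominator $\omega_i=p_iq^{-1}$ ($p_i,q\in D[\lambda]$, using the Ore property) and fix a $k$-basis of $D$; writing each coordinate of $p_i(\lambda)$ as a polynomial in $\lambda$ produces a matrix over $k[\lambda]$ whose generic rank is $r$ precisely because the $p_i$ are $k(\lambda)$-independent. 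Hence some $r\times r$ minor is a nonzero polynomial, and $B_S$ is contained in its finite zero set (together with the finitely many zeros of $q$), so $B_S$ is finite, completing the argument.

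The main obstacle is exactly the passage from ``free group algebra over $D((\lambda))$'' to ``free group algebra over $D$'': the Magnus map only produces the free group algebra inside the Laurent-series overring $D((\lambda))$, where $(1+\lambda a)^{-1}$ is a convergent series, whereas in $D$ the inverse is a genuine element and one cannot simply substitute $\lambda=\lambda_0$. The device that closes this gap is the $k(\lambda)$-independence of the $\Phi(W_i)$ — proved via the $\lambda$-adic/$a,b$-degree bigrading — which makes linear dependence of the specialisations a nontrivial polynomial condition in $\lambda$, hence failing only on a finite set; the uncountability of $k$ then guarantees a simultaneously good specialisation $\lambda_0$.
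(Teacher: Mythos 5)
The paper does not actually prove Theorem \ref{T:FreeAlgD}; it quotes it from \cite{GS96}. Your argument is correct and is essentially the proof of that cited result: the Magnus-type substitution $g\mapsto 1+\lambda a$, $h\mapsto 1+\lambda b$ viewed in $D((\lambda))$, the bigraded argument upgrading $k$-independence of the images of distinct words to $k(\lambda)$-independence, and the specialization step in which each finite set of words rules out only finitely many $\lambda$ (via a nonvanishing minor over $k[\lambda]$), so that uncountability of $k$ supplies a single $\lambda_0$ working for all countably many finite subsets of the free group at once.
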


We now prove our main result.
\begin{thm}
\label{thm: main2}
Let $D$ be a division ring with center $k$, and suppose that $G\le D^*$ is solvable but not locally abelian-by-finite.  Then the following statements hold:

\begin{enumerate} 
 \item if $k$ has characteristic $0$   then $D$ contains a free  $k$-algebra on two generators , and
 \item if $k$ is uncountable then $D$ contains the $k$-group algebra of the free group of rank two.
\end{enumerate}
\end{thm}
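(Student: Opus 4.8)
The plan is to reduce the group-theoretic hypothesis to a single finitely generated subgroup and then locate, inside the division subring it generates, a skew extension to which Proposition \ref{prop1} applies. Since $G$ is not locally abelian-by-finite, there is a finitely generated subgroup $H \le G$ that is not abelian-by-finite. By the contrapositive of Lemma \ref{lem1}, the $k$-subalgebra of $D$ generated by $H$ is not PI, so the division subring $\Delta \subseteq D$ generated by $k$ and $H$ is not locally PI. I would produce inside $\Delta$ a locally PI division $k$-algebra $E$ together with a $k$-algebra automorphism $\sigma$ such that $E(x;\sigma) \subseteq \Delta$ is not locally PI; Proposition \ref{prop1}, which covers both the characteristic-zero case and the uncountable positive-characteristic case, then yields a free $k$-algebra on two generators inside $D$. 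This settles (1) directly, and for (2) I would feed this free algebra into Theorem \ref{T:FreeAlgD}, using that $k$ is uncountable, to obtain the $k$-group algebra of the free group of rank two.

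To build $E$ I would induct on the derived length of $H$. First I would reduce to the case in which $N := [H,H]$ is locally abelian-by-finite: if it is not, then $N$ contains a finitely generated subgroup that is not abelian-by-finite and whose derived length is strictly smaller than that of $H$, so the induction hypothesis already produces a free $k$-algebra inside the division subring generated by $N$. Once $N$ is locally abelian-by-finite, Lemma \ref{lem1} shows that the division subring $E_0$ generated by $k$ and $N$ is locally PI. Moreover $H/N$ is finitely generated abelian, and it is infinite: otherwise $N$ would have finite index, hence be finitely generated and therefore abelian-by-finite, forcing $H$ to be abelian-by-finite.

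Next I would climb from $E_0$ to $\Delta$ along a chain. Since $H/N$ is abelian, every subgroup between $N$ and $H$ is normal in $H$, so I may choose a chain $N = P_0 \triangleleft P_1 \triangleleft \cdots \triangleleft P_m = H$ with each $P_i \trianglelefteq H$ and each quotient $P_i/P_{i-1}$ cyclic. Writing $E_i$ for the division subring generated by $k$ and $P_i$, and choosing $g_i \in P_i$ lifting a generator of $P_i/P_{i-1}$, conjugation by $g_i$ restricts to a $k$-algebra automorphism $\sigma_i$ of $E_{i-1}$ (because $P_{i-1} \trianglelefteq H$), and $E_i$ is the division subring generated by $E_{i-1}$ and $g_i$. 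For each $i$ there is a dichotomy: either the powers of $g_i$ are linearly independent over $E_{i-1}$, in which case $E_i \cong E_{i-1}(x_i;\sigma_i)$ is a skew Laurent extension, or they satisfy a relation, in which case $E_{i-1}[g_i,g_i^{-1}]$ and hence $E_i$ is finite-dimensional over $E_{i-1}$; the latter also occurs whenever $P_i/P_{i-1}$ is finite. A finite-dimensional extension of a locally PI division algebra is again locally PI, so, since $E_0$ is locally PI while $E_m = \Delta$ is not, the transition from locally PI to non-locally PI must occur at a skew Laurent step: there is an index $i$ with $E_{i-1}$ locally PI, $E_i = E_{i-1}(x_i;\sigma_i)$, and $E_i$ not locally PI. Applying Proposition \ref{prop1} to $E_{i-1}(x_i;\sigma_i) \subseteq D$ produces the desired free $k$-algebra, and for (2) Theorem \ref{T:FreeAlgD} then upgrades this to the free group algebra.

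The hard part, and the place where care is needed, is the structural reduction in the last two paragraphs: one must verify that the division subring generated by $H$ is genuinely assembled from $E_0$ by steps that are each either skew Laurent extensions or honest finite-dimensional extensions, and that the failure of local PI-ness is first witnessed at an Ore-extension step with locally PI base, so that Proposition \ref{prop1} (rather than only the field-theoretic Corollary \ref{thm: main1}) can be invoked. The dichotomy between linear independence of the powers of $g_i$, giving the skew Laurent ring, and a linear relation, forcing finite-dimensionality, is the key technical point that makes this bookkeeping go through.
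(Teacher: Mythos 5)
Your proposal is correct, and it rests on the same two pillars as the paper's proof --- Lemma \ref{lem1} together with the Passman--Isaacs theorem to control which division subrings are locally PI, and Proposition \ref{prop1} applied to a skew Laurent extension $E(x;\sigma)$ with $E$ locally PI --- but you locate that critical extension differently. The paper passes to the last derived subgroup $G^{(m)}$ that is not locally abelian-by-finite, invokes Jategaonkar's theorem so that arbitrary subalgebras of $D$ have quotient division rings, and uses Zorn's lemma to choose a subgroup $H$ maximal among those between $G'$ and $G$ whose division subalgebra is locally PI; adjoining a single $g\in G\setminus H$ then forces $L(x;\sigma)$ to be non--locally PI by maximality. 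You instead fix one finitely generated witness $H$ that is not abelian-by-finite, induct on derived length to arrange that $N=[H,H]$ is locally abelian-by-finite, and climb a finite chain $N=P_0\triangleleft\cdots\triangleleft P_m=H$ with cyclic quotients, using the dichotomy (powers of $g_i$ independent over $E_{i-1}$, giving $E_{i-1}(x;\sigma_i)$, versus a relation, giving a finite-dimensional and hence locally PI extension) to find the first step at which local PI-ness fails. Your route is finitary: every ring in the chain is either noetherian or finite over a division subring, so Ore localization is automatic and neither Zorn's lemma nor Jategaonkar's theorem is needed; the paper's route avoids choosing a chain and works with the whole group at once. One small correction: the implication ``$N$ locally abelian-by-finite $\Rightarrow$ $E_0$ locally PI'' is not Lemma \ref{lem1} (which gives the converse direction, and which you correctly use to see that $\Delta$ is not locally PI); it follows from the Passman--Isaacs theorem, exactly as in the paper's corresponding assertion about the division subalgebra generated by $G'$.
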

\begin{proof} Suppose that $D$ does not contain a free algebra on two generators.  Then by a result of Jategaonkar \cite[Proposition 4.13]{KL} any subalgebra of $D$ has a quotient division ring in $D$.  We may assume without loss of generality that $D$ is the quotient division ring of the subalgebra of $D$ generated by $G$ over its center.

Let $m$ be the largest index for which the $m$-th derived subgroup, $G^{(m)}$, of $G$, is not locally abelian-by-finite.  Then we may replace $G$ by $G^{(m)}$ and thus we may assume that $G'$ is locally abelian-by-finite.
Since $G'$ is locally abelian-by-finite, the division subalgebra $E$ of $D$ generated by $G'$ is locally PI.  Now every subgroup $N$ with $G'\le N\le G$ is normal.  We let $\mathcal{S}$ denote the collection of subgroups $N$ with $G'\le N\le G$ such that the division subalgebra of $D$ generated by $N$ is locally PI.  We can order $\mathcal{S}$ by inclusion and by Zorn's lemma $\mathcal{S}$ has a maximal element.  We let $H$ be a maximal element of $\mathcal{S}$.  We let $L$ denote the division subalgebra of $D$ generated by $H$.  Then $L$ is locally PI.  We note that $G\neq H$.  To see this, notice that if it were then we would have that $D$ would be locally PI.  By assumption, however, $G$ has a finitely generated subgroup $G_0$ that is not abelian-by-finite and so the division subalgebra generated by $G_0$ cannot be PI by Lemma \ref{lem1}.  Thus $D$ is not locally PI and so there is some $g\in G\setminus H$.  

Now $g$ induces an automorphism $\sigma$ of $H$ via conjugation since $H$ is normal in $G$.  Let $D_0$ denote the division subring of $D$ generated by $H$ and $g$.  By maximality of $H$, we have that $D_0$ is not locally PI.  Moreover, since $H$ is normal in $g$, we see that $gLg^{-1}=L$ and so there is a surjective homomorphism
$$L[x,x^{-1};\sigma]\to L\langle g,g^{-1}\rangle,$$ where $L\langle g,g^{-1}\rangle$ denotes the algebra generated by $L$, $g$, and $g^{-1}$ over $Z$ and $\sigma$ is the automorphism of $L$ induced by conjugation by $g$.

It follows that $D_0\cong Q(L[x,x^{-1};\sigma]/I)$ where $I$ is some ideal.  Now if $I$ is nonzero then
$L[x,x^{-1};\sigma]/I$ is a finite free $L$-module of some finite rank $d$ and thus $L[x,x^{-1};\sigma]$ embeds in $M_d(L)$, which is locally PI since $L$ is locally PI.  Since $D_0$ is not locally PI, we thus see that $I$ is zero and so
$D_0 \cong Q(L[x,x^{-1};\sigma])=L(x;\sigma)$.  Since $L(x;\sigma)$ is not locally PI and $L$ is locally PI, we see that $D_0\cong L(x;\sigma)$ contains a free algebra on two generators by Proposition \ref{prop1}.  Since $D_0\subseteq D$, we obtain the result for $D$.

Therefore, if characteristic of $k$ is $0$ we have case $(1)$, and if $k$ is uncountable, by Theorem \ref{T:FreeAlgD}, we have case $(2)$.
\end{proof}

\begin{rem}
We cannot replace locally abelian-by-finite by abelian-by-finite in the statement of Theorem \ref{thm: main2}.
\end{rem}  
For example, if we let
$E=\mathbb{C}(x_1,x_2,\ldots)$ and let $\sigma$ be the automorphism that sends $x_i\mapsto \omega_i x_i$ where $\omega_i$ is a primitive $i$-th root of unity then 
$D:=E(t;\sigma)$ is locally PI and not PI.  The reason $D$ is locally PI is that it is a direct limit of division rings $D_n=\mathbb{C}(x_1,\ldots ,x_n)(t;\sigma)$ and $\sigma$ has order dividing $n!$ on $\mathbb{C}(x_1,\ldots ,x_n)$.  On the other hand, $D$ contains $\mathbb{C}(x_n)(t;\sigma)$, whose PI degree goes to infinity as $n\to\infty$ and so $D$ is not PI.

Notice that $N:=E^*$ and $H:=\langle t\rangle$ are two subgroups of $D^*$. Since $H$ normalizes $N$ we have that $G=HN$ is a subgroup of $D^*$ and since $N\cap H = \{ 1\}$ we have a short exact sequence
$$1\to N\to G\to H\to 1.$$  Thus $G$ is metabelian.  We observe that $G$ cannot be abelian-by-finite: if it were, we would then have that the subalgebra of $D$ generated by $G$ would be PI by the Passman-Isaacs theorem \cite{Passman11}.  But the subalgebra generated by $G$ is all of $E[t,t^{-1};\sigma]$ which is locally PI and not PI.  On the other hand, $D$ cannot contain a free algebra on two generators since $E[t,t^{-1};\sigma]$ is locally PI.  We do, however, have that $G$ is locally abelian by finite by the theorem.  In this case, we can see that $G=\bigcup G_n$ where $G_n$ is the subgroup
$\mathbb{C}(x_1,\ldots ,x_n)^*H$.  Notice that conjugation by $t^{n!}$ fixes each of $x_1,\ldots ,x_n$ and hence we have a short exact sequence
$$1\to \mathbb{C}(x_1,\ldots ,x_n)^*\langle t^{n!} \rangle \to G_n \to \mathbb{Z}/n!\mathbb{Z}\to 0,$$ and
$$\mathbb{C}(x_1,\ldots ,x_n)^*\langle t^{n!}\rangle$$ is abelian and so each $G_n$ is abelian-by-finite.  Thus $G$ is a directed union of abelian-by-finite groups and so any finitely generated subgroup of $G$ is abelian-by-finite since it lies in some $G_n$.
 
\section{Results on free subgroups}
In this short section we give some results on the existence of free non-cyclic subgroups in the multiplicative group of division rings of the form $K(t;\sigma,\delta)$, where $K$ is a field and $\sigma$ and $\delta$ are respectively an automorphism and a $\sigma$-derivation of $K$.  We recall that every such ring is isomorphic to a ring of the form $K(t;\sigma)$ or $K(t;\delta)$ where $\sigma$ and $\delta$ are respectively an automorphism and a derivation of $K$.  We first consider the automorphism case.

\begin{thm} Let $K$ be a field, let $\sigma$ be an automorphism of $K$, let $k= \{ \alpha \, | \, \sigma(\alpha)=\alpha \}$ be the fixed field of $\sigma$, and let $k_{0}$ be the prime field of $K$.
Assume that $k$ has infinite transcendence degree over $k_{0}$. Then $K(t;\sigma)$ contains a free non-cyclic subgroup.
\label{thm: JG}
\end{thm}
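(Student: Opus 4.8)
The plan is to exhibit two explicit units of $D=K(t;\sigma)$ and prove they generate a free group by a leading-term analysis in the skew Laurent series completion $K((t;\sigma))$, transferring a \emph{generic} freeness statement down to $K$ by means of the infinite transcendence degree hypothesis. First I would dispose of the trivial case: if $\sigma=\mathrm{id}$ then $D$ is a commutative field and there is nothing to prove, so we may assume $\sigma\neq\mathrm{id}$ and fix $a\in K$ with $\sigma(a)\neq a$; this moved element is what will force non-commutativity. The point to emphasize is that when $\sigma$ has infinite order the center of $D$ is exactly $k$, which can be countable even under our hypothesis, so Chiba's theorem \cite{Chiba96} on division rings with uncountable center does not apply directly. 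The role played by ``uncountably many choices'' in Chiba's argument will instead be played by the infinitely many algebraically independent elements of the fixed field $k$.

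Working inside $\widehat{D}=K((t;\sigma))$ with the $t$-adic valuation, I would take generators of the form $p=1+t$ and $q=1+\lambda t$, where $\lambda\in K^{*}$ is built from $a$ (for instance $\lambda=a^{-1}\sigma(a)\neq 1$, which is genuinely moved by $\sigma$, so that $p$ and $q$ do not commute) and is scaled by central transcendentals $c_{1},c_{2},\dots\in k$ drawn from an algebraically independent set. Every element of $\langle p,q\rangle$ lies in $1+tK[[t;\sigma]]$, and expanding a reduced word $W=g_{1}\cdots g_{r}$ with $g_{i}\in\{p^{\pm1},q^{\pm1}\}$ gives $W=1+w_{1}t+w_{2}t^{2}+\cdots$, where each $w_{j}\in K$ is an integer-coefficient polynomial in the translates $\sigma^{i}(\lambda)$, hence in the $c_{n}$ and the $\sigma^{i}(a)$. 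The linear term is $w_{1}=m+n\lambda$, with $m,n$ the exponent sums of $p,q$ in $W$; this already settles every word of nonzero exponent sum once $\lambda\notin\mathbb{Q}$, and the remaining ``commutator-type'' words must be handled through the higher coefficients $w_{j}$.

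The heart of the argument is to show that for every nontrivial reduced word $W$ some coefficient $w_{j}$ is a \emph{nonzero} polynomial in the independent transcendentals $c_{n}$. Granting this, $w_{j}\neq 0$ in $K$ because the $c_{n}$ are algebraically independent over $k_{0}$, and hence over the finitely generated subfield $k_{0}(\sigma^{i}(a))$ involved in the finite-length word $W$; thus $W\neq 1$ and the group is free. The cleanest way to organize the passage to $K$ is a Chiba-style transfer: prove the statement first in the generic division ring $\mathcal{D}=K'(t;\sigma)$ with $K'=K(C_{1},C_{2},\dots)$ for independent indeterminates $C_{n}$ and $\sigma$ extended so as to fix them, where the leading-term computation is transparent since indeterminates cannot cancel. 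One then observes that for each reduced word the condition ``$W(p,q)=1$'' cuts out a proper $k_{0}$-closed condition on the parameters, and since $k$ has infinite transcendence degree over $k_{0}$ it is not a countable union of proper $k_{0}$-subvarieties; hence one can specialize the $C_{n}$ to algebraically independent $c_{n}\in k$ simultaneously avoiding all of these countably many conditions.

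I expect the main obstacle to be exactly the combinatorial non-cancellation claim: showing that the first non-vanishing $w_{j}$ of a reduced word is a genuinely nonzero polynomial in the $c_{n}$, with no accidental cancellation coming from algebraic relations among the translates $\sigma^{i}(a)$. This is delicate precisely because the non-commutativity that yields freeness is supplied by the moved element $a$, whose $\sigma$-orbit need not be algebraically independent, whereas the genericity that kills cancellation must come from the central $c_{n}$; making the two cooperate in a single pair of generators is the crux. I anticipate this requires a careful leading-string bookkeeping on the $t$-graded expansion, closely analogous to the weight-function and leading-term argument used in the proof of Proposition \ref{prop1}.
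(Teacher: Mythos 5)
Your plan has the right general shape---produce units $1+t$ and $1+\lambda t$ and certify freeness by a coefficient analysis in $K((t;\sigma))$, using central transcendentals to block cancellation---but the step you defer is the entire theorem, and the one inference you do supply to bridge the generic and specialized settings is false as stated. You claim that once some coefficient $w_j$ of a reduced word is a nonzero polynomial in the $c_n$, it remains nonzero in $K$ ``because the $c_n$ are algebraically independent over $k_0$, and hence over the finitely generated subfield $k_0(\sigma^i(a))$.'' Algebraic independence over the prime field does not pass to algebraic independence over the orbit field $k_0(A)$, $A=\{\sigma^n(a)\}$: an element of the fixed field $k$ can perfectly well be algebraic over $k_0(A)$. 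To repair this you must choose the $c_n$ algebraically independent over $k_0(A)$ itself, and the hypothesis that $k$ has infinite transcendence degree over $k_0$ only guarantees such a choice when $k_0(A)$ has finite transcendence degree, i.e.\ is finitely generated. This forces exactly the trichotomy that the proof in the text uses and that your sketch omits: (a) some $a\notin k$ has finite $\sigma$-orbit, in which case $k(a)(t)$ is finite-dimensional over its center and one quotes \cite{G84}; (b) $k_0(A)$ is not finitely generated over $k_0$, where your specialization scheme has no room to run and the text instead invokes \cite[Prop.~2.4]{GP15a}; (c) $k_0(A)$ is finitely generated, where one adjoins two elements $X,Y\in k$ algebraically independent over the center of $\overline{D}=k_0(a)(t)$ (which now has finite transcendence degree) and applies \cite[Prop.~5.8]{GP15a}.

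Beyond that, what you yourself call the heart of the argument---that for every nontrivial reduced word some $w_j$ is a nonzero polynomial in the $c_n$, with no cancellation arising from algebraic relations among the $\sigma^i(a)$---is stated as an expectation rather than proved, and it does not follow from the weight and leading-term bookkeeping of Proposition~\ref{prop1}: that argument lives in the $\mathbb{Z}$-graded ring $E[x,x^{-1};\sigma]$ and rests on the hypothesis that $\sigma^{j}(b_1)$ lies outside the division algebra generated by the earlier translates, whereas your generators $1+t$ and $1+\lambda t$ are inhomogeneous and their inverses are infinite series, so the grading collapse is unavailable. (A smaller point: in characteristic $p$ the linear term $w_1=m+n\lambda$ only yields $m\equiv n\equiv 0\ (\mathrm{mod}\ p)$, not that the exponent sums vanish, so the ``nonzero exponent sum'' case is not settled by the $t$-coefficient alone.) The proof in the text does not establish this combinatorial core from scratch either; it reduces to it in exactly the form already proved in \cite{GP15a}. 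As written, then, your proposal is a programme whose decisive lemma is missing and whose specialization step cannot be carried out until the case division above is put in place.
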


\begin{proof}
Let $a \in K$ be such that $[a, t]=a^{-1}t^{-1}at \ne 1$. If $A= \{ a^{\sigma^{n}} \, | \, n \in \mathbb{Z} \}$ is finite, then $\overline{D}=k(a)(t)$, the division ring generated by $t$ and $a$ over $k$ is finite-dimensional over $k$. By \cite{G84}, $\overline{D}$ contains a free subgroup.

Thus we may assume that for every $a \in K \setminus k$ the set $A$ is infinite. 

Let us choose such an $a$ and let $\overline{D}$ denote the division ring $k_{0}(a)(t)$, the division ring generated by $a$ and $t$ over $k_{0}$. Let $\overline{Z}$ be the center of $\overline{D}$.

We have two possibilities:

\begin{enumerate}
 \item $k_{0}(A)$ is not finitely generated over $k_{0}$. By \cite[Prop. 2.4]{GP15a}, $\overline{D}$ contains a free subgroup.
 \item $k_{0}(A)$ is finitely generated over $k_{0}$. Then the transcendence degree of $\overline{Z}$ over $k_{0}$ is finite, and as in the proof of \cite[Theorem 5.1]{GP15a}, we can find two elements $X$ and $Y$ in $k$, algebraically independent over $\overline{D}$. By \cite[Proposition 5.8]{GP15a}, $\overline{D}(X,Y)$ contains a free subgroup. And so $D$ also does.
\end{enumerate}

\end{proof}
We remark that the derivation case is considerably easier and in this situation one has that the multiplicative group of $K(t;\delta)$ contains a free non-cyclic subgroup whenever $\delta$ is a nonzero derivation of $K$.  
Since there is some $a\in K$ with $\delta(a)\neq 0$, we can set $u=t\delta(a)^{-1}$.  Then since $a$ and $\delta(a)$ commute, we have
$$[u,a] = [t,a]\delta(a)^{-1} = 1.$$
Thus if $K$ has characteristic $p>0$ we have that the division ring generated by $u$ and $a$ is not commutative and finite-dimensional over its center, and thus its multiplicative group contains a free non-cyclic group by \cite{G84}.  If $K$ has characteristic zero, then the division ring generated by $u$ and $a$ is isomorphic to the Weyl algebra and hence its multiplicative group contains a free non-cyclic group [ \cite{FG15} Theorem 1.3]
Thus for the question of existence of free non-cyclic subgroups in the multiplicative group of $D=K(t;\sigma,\delta)$, we know that it is always true, unless $D$ is a field (in which case it cannot have one), or when $\sigma$ has infinite order and the fixed field of $\sigma$ has finite transcendence degree over the prime field.

\end{document}